\documentclass[12pt]{amsart}
\usepackage{tikz-cd}
\usepackage[outline]{contour}
\contourlength{0.8pt} 

\usepackage{verbatim}
\usepackage{xcolor}
\usepackage{url}
\usepackage{hyperref}
\usepackage{amssymb}
\usepackage{enumitem}
\usepackage[margin=3.3cm]{geometry}
\usepackage{mathtools}
\usepackage{graphicx}
\usepackage{setspace}
\numberwithin{equation}{section}
\numberwithin{figure}{section}
\usetikzlibrary{calc,intersections}
\usepackage{changepage} 

\let\cal\mathcal
\def\Ascr{{\cal A}}

\def\Dscr{{\cal D}}
\def\Escr{{\cal E}}
\def\Fscr{{\cal F}}
\def\Gscr{{\cal G}}
\def\Hscr{{\cal H}}

\def\Oscr{{\cal O}}
\def\Pscr{{\cal P}}

\def\Sscr{{\cal S}}
\def\Tscr{{\cal T}}

\def\Zscr{{\cal Z}}

\let\blb\mathbb
\def\CC{{\blb C}} 
 
\def\EE{{\blb E}} 
 
\def\FF{{\blb F}} 

\def\GG{{\blb G}}
\def \PP{{\blb P}}

\def \ZZ{{\blb Z}}

\def \RR{{\blb R}}
\def \HH{{\blb H}}

\def \aa{{\mathbf a}}
\def \bb{{\mathbf b}}
\def \ppp{{\mathbf p}}
\def \qqq{{\mathbf q}}

\let\oldmarginpar\marginpar
\def\marginpar#1{\setlength{\marginparwidth}{0.15\textwidth}\oldmarginpar{\tiny \baselineskip 0pt\lineskip 0pt\lineskiplimit 0pt  \raggedright #1}}

\def\length{\mathop{\text{length}}}

\def\ch{\mathop{\text{Ch}}}

\def\coh{\mathop{\text{\upshape{coh}}}}

\def\GL{\operatorname {GL}}

\def\PGL{\operatorname {PGL}}

\def\Ext{\operatorname {Ext}}

\def\Hom{\operatorname {Hom}}

\def\RHom{\operatorname {RHom}}

\def\H{\operatorname {H}}

\def\Pic{\operatorname {Pic}}

\DeclareMathOperator{\Aut}{Aut}

\DeclareMathOperator{\Bl}{Bl}

\def\K{\operatorname{K}}
\def\R{\operatorname{R}}
\def\RHom{\operatorname{RHom}}
\def\length{\operatorname{length}}
\def\td{\operatorname{td}}

\newtheorem{lemma}{Lemma}[section]

\newtheorem{theorem}[lemma]{Theorem}
\newtheorem{corollary}[lemma]{Corollary}

\theoremstyle{definition}

{

}

\theoremstyle{remark}

\newtheorem{remark}[lemma]{Remark}

\newdimen\uboxsep \uboxsep=1ex
\def\uboxn#1{\vtop to 0pt{\hrule height 0pt depth 0pt\vskip\uboxsep
\hbox to 0pt{\hss #1\hss}\vss}}

\def\uboxs#1{\vbox to 0pt{\vss\hbox to 0pt{\hss #1\hss}
\vskip\uboxsep\hrule height 0pt depth 0pt}}

\newtheoremstyle{named}{}{}{\itshape}{}{\bfseries}{.}{.5em}{\thmnote{#3's }#1}
\theoremstyle{named}

\def\Stab{\operatorname{Stab}}

\makeatletter
\@namedef{subjclassname@2020}{\textup{2020} Mathematics Subject Classification}
\makeatother

\title[A counterexample to Bondal-Polishchuk's conjecture]{A smooth projective counterexample to Bondal-Polishchuk's conjecture} 

\subjclass[2020]{}
\keywords{}
\author{Anya Nordskova}
\thanks{}
\email{anya.nordskova@ipmu.jp}
\address[Anya Nordskova]{Kavli IPMU (WPI), UTIAS, The University of Tokyo, Kashiwa, Chiba 277-8583, Japan}
\begin{document}
\begin{abstract}
For a particular smooth projective (weak Fano) threefold $X$ we show that the braid group action on the set of full exceptional collections in $\Dscr^b(X)$ has infinitely many orbits,  in particular,  it is not transitive.  This provides a counterexample to a conjecture of Bondal and Polishchuk from 1993. The conjecture was first disproved by Chang, Haiden, and Schroll, who constructed a family of partially wrapped Fukaya categories for which the transitivity fails. However, no counterexample of the form $\Dscr^b(X)$ for $X$ a smooth projective variety was previously known.  In addition,  we show that the space of Bridgeland stability conditions $\Stab(X)$ on $X$ has infinitely many connected components.  This is the first known example of a smooth projective variety whose space of Bridgeland stability conditions is disconnected.  Finally, we apply a similar method to establish that $\Stab(Y)$ for a symmetric quintic threefold $Y$ is also disconnected. 
\end{abstract}
\maketitle
 \tableofcontents 
\section{Introduction}
 Let $\Dscr$ be a $\Hom$-finite triangulated category with an exceptional collection of length $n$.  As discovered by Bondal \cite[Assertion~2.3]{Bondal89},  the set of all full exceptional collections in $\Dscr$ carries a natural action of the $n$-strand braid group $B_n$ generated by mutations. 

In \cite[Conjecture 2.2]{BondalPolishchuk} Bondal and Polishchuk conjectured that this action is always transitive,  up to independent shifts of the $n$ exceptional objects.  Much later the conjecture was disproved by Chang, Haiden and Schroll \cite[Theorem~5.4]{ChangHaidenSchroll},  who constructed a series of partially wrapped
Fukaya categories of graded surfaces (equivalently,
derived categories of graded gentle algebras) with infinitely many braid group orbits.  Subsequently,  Nakago and Takahashi
\cite{NakagoTakahashi} gave a purely algebraic proof of the same non-transitivity result.  

Now let $\Dscr$ be the derived category $\Dscr^b(X)$ of a smooth projective variety $X$.  In this setting the transitivity has been established for del Pezzo surfaces \cite{KuleshovOrlov},  the second Hirzebruch surface $\FF_2$ \cite{IshiiOkawaUehara},  and Fano threefolds of Picard rank $1$ processing a full exceptional collection 
\cite{NVdB}.  Chang-Haiden-Schroll ask \cite[Question 5.6]{ChangHaidenSchroll} a natural question whether a counterexample of the form $\Dscr^b(X)$ with $X$ a smooth projective variety can occur.  We answer this question affirmatively, following precisely the strategy suggested in \cite{ChangHaidenSchroll}, namely,  to distinguish braid orbits by the action of a fixed autoequivalence.  More precisely,  the authors obverse (see Lemma \ref{lem:CHS} below) that an autoequivalence which preserves one full exceptional collection term-wise,  up to shifts,  automatically also preserves every exceptional collection in its braid group orbit.  Hence,  to produce a counterexample it suffices to find another full exceptional collection which is \emph{not} preserved by the same autoequivalence up to shifts.

Our construction is roughly as follows.  We choose a smooth rational sextic curve $C \subset \PP^3$ and an involution $\varphi \in \PGL(4,\CC)$ such that $\varphi$ preserves $C$ and interchanges two lines $\ell_1, \ell_2 \in \PP^3$ with $\length(\ell_i \cap C) = 4$.   The variety producing a counterexample is $X=\Bl_C\PP^3$.  Orlov's blow-up formula gives a $6$-term full
exceptional collection $\EE$ on $X$, glued from the standard full exceptional collections on $\PP^3$ and $C \cong \PP^1$. We show that the lift $\varphi_X$ of $\varphi$ to $X$ fixes $\EE$ term-wise.  On the other hand, we show that $\Sscr_i := \Oscr_{\widetilde{\ell_i}}(-1)$ are spherical in $\Dscr^b(X)$,  where $\widetilde{\ell_i}$ denotes the strict transform of $\ell_i$.  Moreover, $\varphi_X^*(\Sscr_1) = \Sscr_2$.  Using the standard relation $\varphi_X^* \circ T_{\Sscr_1} \cong T_{\varphi_X^* \Sscr_1} \circ \varphi_X^*$ for spherical twists we then show that $T_{\Sscr_1}(\EE)$ is not preserved by $\varphi_X^*$.  Thus,  as explained above,  $\EE$ and $T_{\Sscr_1}(\EE)$ are not in the same braid group orbit,  see Theorem \ref{thm:twoorb}. 

We then show that in fact the braid group action on the set of full exceptional collections in $\Dscr^b(X)$ has infinitely many orbits (Theorem \ref{thm:main}, Corollary \ref{cor:inf}).  Moreover,  in Corollary \ref{cor:K0_invisible} we present an infinite family of orbits which are indistinguishable on the level of the Grothendieck group.  

In addition,  in Theorem \ref{thm:stab} we show that the space of Bridgeland stability conditions $\Stab(X)$ on $X$ has infinitely many connected components,  even over the same central charge.  This is the first known example of a smooth projective variety for which $\Stab(X)$ is disconnected.  The main ingredient is Lemma \ref{lem:stab_components},  which allows us to distinguish connected components containing algebraic stability conditions associated to two exceptional collections via the action of an autoequivalence,  in the same way as with braid group orbits.  

Finally,  in \ref{thm:quintic} we apply a similar method to show that the space of Bridgeland stability conditions $\Stab(Y)$ on a symmetric quintic threefold $Y$ is also disconnected.

\subsection{Acknowledgement} The author is very grateful to Fabian Haiden for inspiring discussions during the conference ``Symplectic topology meets representation theory',  which took place in Cambridge in April 2026,  as well as to the organisers of that event.  The author would also like to thank Yukinobu Toda for drawing her attention to the question of (dis)connectedness of $\Stab(X)$ and for suggesting to consider quintic threefolds.  This work was supported by World Premier International Research Center Initiative (WPI),  MEXT,  Japan.

\subsection{AI disclosure} We would like to acknowledge the assistance of OpenAI's ChatGPT 5.6,  which has found numerous mistakes in earlier attempted constructions on the weak Fano threefold $X$, leading to subsequent successful modifications.  It has also found the reference \cite{BlancLamy}.  It has not,  however,  produced the actual counterexample or any of the statements or proofs,  nor has it been used to generate any part of this text. 

\subsection{Conventions and notation.}
All varieties, categories and functors are defined over $\CC$. For a smooth projective variety $X$ we write $
\Dscr^b(X):=\Dscr^b(\coh(X))$ for the bounded derived category of coherent sheaves on $X$ and $\K_0(X)$ for its Grothendieck group.  Note that $\K_0(X)$ coincides with the numerical Grothendieck group $\K_{num}(X)$ when $X$ has a full exceptional collection.  
We write $\RHom_X(A,B)$ for the derived Hom complex and
$\Ext_X^k(A,B)=\H^k\RHom_X(A,B)$.  For a spherical object $\Sscr\in\Dscr^b(X)$ we denote by
$T_{\Sscr} \in \Aut \Dscr^b(X)$ the corresponding spherical twist functor \cite{SeidelThomas}, where $\Aut \Dscr^b(X)$ stands for the group of isomorphism classes of exact autoequivalences of $\Dscr^b(X)$.   We denote by $B_n$ the braid group on $n$ strands. The group $B_n\ltimes\ZZ^n$ acts on full exceptional
collections of length $n$, where $B_n$ acts by mutations and $\ZZ^n$ by
independent shifts. We refer to \cite{Bondal89} for details. 

\section{Strategy} 

We recall the following observation made by Chang, Haiden and Schroll: 

\begin{lemma}[{\cite[Lemma~5.7]{ChangHaidenSchroll}}]
\label{lem:CHS}
Let $\EE=(\Escr_1,\dots,\Escr_n)$ and $\FF=(\Fscr_1,\dots,\Fscr_n)$ be full exceptional
collections in a $\Hom$-finite triangulated category $\Dscr$.  Suppose that $\EE$ and
$\FF$ lie in the same $B_n\ltimes\ZZ^n$-orbit.  If
$\Phi\in\Aut \Dscr$ fixes $\EE$ term-wise up to shifts (i.e. $\Phi(\Escr_i)\cong \Escr_i[k_i]$ for some $k_i\in\ZZ$ and every $i$), then $\Phi$ also fixes $\FF$ term-wise up to shifts. 
\end{lemma}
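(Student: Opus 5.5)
The plan is to show that the set $\mathcal{F}_\Phi$ of full exceptional collections fixed term-wise up to shifts by $\Phi$ is stable under the generators of $B_n\ltimes\ZZ^n$ and their inverses. Since $\FF$ is obtained from $\EE$ by a finite word in these generators, induction on the word length then gives the claim. Stability under shifts is immediate, because $\Phi$ commutes with $[1]$. The shift acting on $\Escr_i$ stays $k_i$ after replacing $\Escr_i$ by $\Escr_i[m]$.

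For mutations, I would take an adjacent pair $(\Escr_i,\Escr_{i+1})$ in a collection in $\mathcal{F}_\Phi$ and consider the left mutation $L_{\Escr_i}\Escr_{i+1}$. It is defined by the distinguished triangle
\[
\RHom(\Escr_i,\Escr_{i+1})\otimes \Escr_i \to \Escr_{i+1} \to L_{\Escr_i}\Escr_{i+1}.
\]
Apply $\Phi$, which is exact and commutes with tensoring by finite-dimensional graded vector spaces. This gives a triangle
\[
\RHom(\Escr_i,\Escr_{i+1})\otimes \Phi\Escr_i \to \Phi\Escr_{i+1} \to \Phi L_{\Escr_i}\Escr_{i+1}.
\]
Substituting $\Phi\Escr_i\cong\Escr_i[k_i]$ and $\Phi\Escr_{i+1}\cong \Escr_{i+1}[k_{i+1}]$, I would check that the first map is, up to these isomorphisms, the $[k_{i+1}]$-shift of the canonical evaluation map, possibly precomposed with a graded automorphism of the vector space. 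The clean way to see this is as follows. Under the identification $\RHom(\Phi\Escr_i,\Phi\Escr_{i+1})\cong\RHom(\Escr_i,\Escr_{i+1})$ induced by $\Phi$, the map $\Phi(\mathrm{ev})$ corresponds to the evaluation map for the pair $(\Phi\Escr_i,\Phi\Escr_{i+1})$. Evaluation maps are natural with respect to isomorphisms of the objects involved, and $\RHom(\Escr_i[k_i],\Escr_{i+1}[k_{i+1}])=\RHom(\Escr_i,\Escr_{i+1})[k_{i+1}-k_i]$, so the evaluation map for $(\Escr_i[k_i],\Escr_{i+1}[k_{i+1}])$ is the $[k_{i+1}]$-shift of the evaluation map for $(\Escr_i,\Escr_{i+1})$. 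Hence the cones agree, giving $\Phi(L_{\Escr_i}\Escr_{i+1})\cong L_{\Escr_i}\Escr_{i+1}[k_{i+1}]$.

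The other terms of the mutated collection are unchanged, so the mutated collection lies in $\mathcal{F}_\Phi$. Right mutations, which are the inverse generators, are handled by the dual triangle with $\RHom(\Escr_{i},\Escr_{i+1})^*\otimes\Escr_{i+1}$ in exactly the same way.

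The only step that needs care is this identification of $\Phi$ applied to the evaluation morphism. Cones are only unique up to non-unique isomorphism, but that is harmless here: we only need an isomorphism of objects, and two isomorphic morphisms have isomorphic cones by TR3 together with the five lemma for triangles. Hom-finiteness is used only so that the mutation triangles, and hence the $B_n$-action, are defined.
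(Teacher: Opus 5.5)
Your proof is correct and is the standard argument: an exact autoequivalence commutes with mutations, $\Phi(L_{A}B)\cong L_{\Phi A}(\Phi B)$, and mutations are compatible with shifts, $L_{A[a]}(B[b])\cong (L_{A}B)[b]$ (dually for right mutations), followed by induction on the length of a word in the generators of $B_n\ltimes\ZZ^n$. The paper gives no proof of its own here and simply cites \cite[Lemma~5.7]{ChangHaidenSchroll}, which rests on this same observation.
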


The main idea behind our construction is contained in Lemma \ref{lem:strategy} below.  

\begin{lemma}\label{lem:strategy} Let $X$ be a smooth projective variety and $\EE := (\Escr_1, \dots, \Escr_n)$ a full exceptional collection in $\Dscr^b(X)$. Suppose there exist 
\begin{enumerate}
\item an autoequivalence $\Phi \in \Aut \Dscr^b(X)$ such that $\Phi(\Escr_i) \cong \Escr_i$ for all $i$, 
\item a spherical object $\Sscr \in \Dscr^b(X)$ and $j \in \{1,\dots,n\}$ such that $T_\Sscr(\Escr_j)$ is not isomorphic to a shift of $T_{\Phi (\Sscr)}(\Escr_j)$.
\end{enumerate}
Then $\EE$ and $T_\Sscr(\EE) := (T_\Sscr(\Escr_1), \dots, T_\Sscr(\Escr_n))$ are not in the same $B_n \ltimes \ZZ^n$-orbit. 
\end{lemma}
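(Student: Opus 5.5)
We argue by contradiction using Lemma \ref{lem:CHS}. Suppose $\EE$ and $T_\Sscr(\EE)$ lie in the same $B_n\ltimes\ZZ^n$-orbit. Hypothesis (1) says $\Phi$ fixes $\EE$ term-wise (with all shifts $k_i=0$). Lemma \ref{lem:CHS} then shows that $\Phi$ fixes $T_\Sscr(\EE)$ term-wise up to shifts. In particular there is some $k\in\ZZ$ with $\Phi(T_\Sscr(\Escr_j))\cong T_\Sscr(\Escr_j)[k]$. We only need to check that $T_\Sscr(\EE)$ is indeed a full exceptional collection, which holds because $T_\Sscr$ is an autoequivalence.

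Next we compute $\Phi(T_\Sscr(\Escr_j))$ by a second route. We use the standard conjugation identity for spherical twists: for any autoequivalence $\Phi$ and spherical $\Sscr$, the object $\Phi(\Sscr)$ is again spherical and $\Phi\circ T_\Sscr\cong T_{\Phi(\Sscr)}\circ\Phi$. To justify it, apply $\Phi$ to the defining triangle
\[
\RHom(\Sscr,E)\otimes\Sscr\to E\to T_\Sscr(E).
\]
Since $\RHom(\Phi\Sscr,\Phi E)\cong\RHom(\Sscr,E)$ and $\Phi$ is exact, the image is the defining triangle of $T_{\Phi(\Sscr)}(\Phi E)$. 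For the purposes here this objectwise statement suffices, and functoriality is not needed. Applying it to $E=\Escr_j$ and using $\Phi(\Escr_j)\cong\Escr_j$ from (1) gives
\[
\Phi(T_\Sscr(\Escr_j))\cong T_{\Phi(\Sscr)}(\Phi(\Escr_j))\cong T_{\Phi(\Sscr)}(\Escr_j).
\]

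Combining the two computations yields $T_{\Phi(\Sscr)}(\Escr_j)\cong T_\Sscr(\Escr_j)[k]$, so $T_\Sscr(\Escr_j)\cong T_{\Phi(\Sscr)}(\Escr_j)[-k]$. This contradicts hypothesis (2). There is no substantial obstacle; the only point needing care is the objectwise conjugation identity. Cones are not functorial, so one should either cite the functorial version of Seidel--Thomas via Fourier--Mukai kernels, or note that the evaluation map $\RHom(\Sscr,E)\otimes\Sscr\to E$ is carried by $\Phi$ to the evaluation map for $\Phi(\Sscr)$. The cones are then isomorphic, because cones of isomorphic morphisms are isomorphic.
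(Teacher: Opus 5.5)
Your proof is correct and follows the same argument as the paper: you apply Lemma~\ref{lem:CHS}, then the conjugation identity $\Phi\circ T_\Sscr\cong T_{\Phi(\Sscr)}\circ\Phi$ evaluated at $\Escr_j$, then hypothesis (2). The only difference is that you verify the objectwise identity directly, by transporting the evaluation triangle along $\Phi$, whereas the paper cites \cite[Lemma 8.21]{HuybrechtsFM} for the natural isomorphism; as you note, the objectwise version is all that is needed.
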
 

\begin{proof} By Lemma \ref{lem:CHS} it is sufficient to show that $\Phi$ does not fix $T_\Sscr(\EE)$ up to shifts.  Indeed,  by \cite[Lemma 8.21]{HuybrechtsFM} there is a natural isomorphism $\Phi \circ T_\Sscr \cong T_{\Phi (\Sscr)} \circ \Phi$.
Applying both sides to $\EE$ yields 
\[
\Phi(T_\Sscr(\EE)) \cong T_{\Phi(\Sscr)}(\EE), 
\]
but $T_{\Phi(\Sscr)}(\Escr_j)$ is not isomorphic to a shift of $T_\Sscr(\Escr_j)$ by assumption,  so neither is $\Phi(T_\Sscr(\Escr_j))$.  
\end{proof} 

In \S \ref{sec:construction} we construct $X$, $\EE$, $\Phi$ and $\Sscr$ satisfying the conditions of Lemma \ref{lem:strategy}.  In our construction the autoequivalence $\Phi$ will in fact be induced by an automorphism of the variety $X$.  Applying Lemma \ref{lem:strategy} we then immediately see that the braid group action on the set of full exceptional collections in $\Dscr^b(X)$ is not transitive (Theorem \ref{thm:twoorb}).  Infinitely many orbits are then obtained via virtually the same method applied to compositions of spherical twists rather than just one spherical twist (Theorem \ref{thm:main}).

\section{Construction}\label{sec:construction}
\subsection{The variety} 

We start with the following $6$ points on $\PP^2$
\[
p_1 = [1:0:1], \quad p_2 = [1:0:-1], \quad p_3 = [0:1:1],
\]
\[
p_4 = [0:1:-1], \quad p_5 = [1:2:0], \quad p_6 = [1:3:0].
\]
It is easy to check that no three points are collinear.  Assume that there is a conic 
\[
Ax^2 + Bxy + Cy^2 + Dxz + Eyz + Fz^2 = 0
\]
containing all six points.  Substituting $p_1,p_2,p_3$, and $p_4$ yields $D=0$, $A+F=0$, $E=0$, and $C+F=0$.  Combining this with evaluations at $p_5$ and $p_6$ yields $2B-5F = 0$ and $3B-10F =0$, hence $F=B=0$ and all coefficients vanish.  Hence we conclude that $p_i$ are in general position. 

Let $\overline{C} := \{x^2 + y^2 +z^2 = 0\} \subset \PP^2$ be a smooth conic not passing through any of the points $p_i$.  Consider an involution
\[
\overline{\varphi}\in \PGL(3,\CC), \quad \overline{\varphi}([x:y:z]) = [x:y:-z]. 
\]
Note that $\overline{\varphi}$ fixes $\overline{C}$.  Moreover,  
\[
\overline{\varphi}(p_1) = p_2,  \quad \overline{\varphi}(p_3) = p_4,  \quad \overline{\varphi}(p_5) = p_5, \quad  \overline{\varphi}(p_6) = p_6. 
\]

Let $b: S:= \PP^2_{p_1,\dots,p_6} \to \PP^2$ be the blow-up of the six points.  Let $E_1, \dots, E_6$ be the six exceptional curves and $L := b^* \Oscr_{\PP^2}(1)$.  Since $p_i$ are in general position, $S$ is a del Pezzo surface of degree $3$, so $-K_S = 3L - (E_1 + \dots + E_6)$ is very ample and $|-K_S|$ gives an embedding of $S$ into $\PP^3$ as a smooth cubic surface.  Since $\overline{\varphi}$ preserves the set $\{p_1, \dots,p_6\}$, it has a unique lift to an automorphism $\varphi$ of $S$ satisfying $b \circ \varphi = \overline{\varphi} \circ b$.  Since $\varphi$ preserves the canonical class, it induces an automorphism of $\PP(\H^0(S,-K_S)^\vee) \cong \PP^3$, which we will denote by the same letter $\varphi$.  

Let $C := b^{-1}(\overline{C}) \subset S \subset \PP^3$ be the inverse image of the conic $\overline{C}$.  Since $\overline{C}$ contains none of the six points $p_i$,  the curve $C$ is isomorphic to $\overline{C}$. 

Now let $\ell_j \subset S$ be the strict transform of the unique irreducible conic through the five points $\{p_i\}_{i\neq j}$.  Note that $\ell_j$ automatically does not pass through $p_j$ since no conic contains all six points. 

\begin{lemma}\label{lem:before_bl} \begin{enumerate}
\item $C \cong \PP^1$ is a sextic curve in $\PP^3$ with $[C] = 2L \in \Pic(S)$ and $\varphi(C) = C$. 
\item Each $\ell_j$ is a $(-1)$-curve mapped to a line under the embedding of $S$ into $\PP^3$.  Moreover 
\[
[\ell_j] = 2L - \sum_{i\neq j} E_i \in \Pic(S)
\]
and $C \cdot \ell_j = 4$. 
\item $\varphi(\ell_1)=\ell_2$, $\varphi(\ell_3)=\ell_4$, $\varphi(\ell_5)=\ell_5$, and $\varphi(\ell_6)=\ell_6$.
\end{enumerate}
\end{lemma}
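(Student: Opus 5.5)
The plan is to work entirely in $\Pic(S)$ with the intersection form $L^2=1$, $E_i^2=-1$, $L\cdot E_i=0$, $E_i\cdot E_j=0$ for $i\neq j$. Degrees in $\PP^3$ are intersection numbers with the hyperplane class $-K_S=3L-\sum E_i$.

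For (1), note that $C=b^{-1}(\overline{C})$ is the total transform of the conic, and $\overline{C}$ misses every $p_i$. Hence $[C]=b^*[\overline{C}]=2L$, and $b|_C\colon C\to\overline{C}$ is an isomorphism onto a smooth conic, so $C\cong\PP^1$. Its degree is $-K_S\cdot 2L=6$, and since $\Oscr_S(-K_S)$ restricts to the embedding line bundle, $C$ is a sextic in $\PP^3$. Invariance follows from $b\circ\varphi=\overline{\varphi}\circ b$: we get $\varphi(C)=\varphi(b^{-1}\overline{C})=b^{-1}(\overline{\varphi}(\overline{C}))=b^{-1}(\overline{C})=C$, using $\overline{\varphi}(\overline{C})=\overline{C}$ because $z\mapsto -z$ preserves $x^2+y^2+z^2$. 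I would also remark that the two meanings of $\varphi$ (on $S$ and on $\PP^3$) agree on $S$, by construction of the induced action on $\PP(\H^0(-K_S)^\vee)$.

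For (2), fix $j$. The five points $\{p_i\}_{i\neq j}$ lie on a unique conic $Q_j$, and it is irreducible: if it were reducible, some line would contain three of the five points, contradicting the no-three-collinear check. Each $p_i$ with $i\neq j$ is a smooth point of $Q_j$ of multiplicity one, and $p_j\notin Q_j$. Hence the strict transform has class $[\ell_j]=2L-\sum_{i\neq j}E_i$. Then $\ell_j^2=4-5=-1$ and $-K_S\cdot\ell_j=6-5=1$, so $\ell_j\cong Q_j\cong\PP^1$ is a $(-1)$-curve whose image in $\PP^3$ has degree $1$, i.e. is a line. Finally $C\cdot\ell_j=2L\cdot(2L-\sum_{i\neq j}E_i)=4$. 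Since $C\not\subset$ the line, this intersection number equals $\length(\ell_j\cap C)$, as needed later.

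For (3), I would use that $\varphi$ acts on $\Pic(S)$ by fixing $L$, because $\overline{\varphi}$ is linear. It permutes the exceptional curves via $E_1\leftrightarrow E_2$, $E_3\leftrightarrow E_4$, and fixes $E_5,E_6$, since $\varphi$ maps $b^{-1}(p_i)$ to $b^{-1}(\overline{\varphi}(p_i))$. Therefore $\varphi_*[\ell_1]=2L-(E_1+E_3+E_4+E_5+E_6)=[\ell_2]$, and similarly for the other cases. A $(-1)$-curve is the unique effective curve in its class, so $\varphi(\ell_1)=\ell_2$, and likewise $\varphi(\ell_3)=\ell_4$, $\varphi(\ell_5)=\ell_5$, $\varphi(\ell_6)=\ell_6$. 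Alternatively, one can argue geometrically: $\overline{\varphi}(Q_j)$ is the unique conic through $\overline{\varphi}(\{p_i\}_{i\neq j})$.

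The only mildly delicate step is the irreducibility and smoothness of $Q_j$ at the five points, which justifies the class computation of the strict transform. This rests on the general-position check made in the paper. Everything else is routine intersection arithmetic.
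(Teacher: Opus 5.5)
Your proposal is correct and follows essentially the same route as the paper. Parts (1) and (2) use the same intersection computations in $\Pic(S)$; your extra justifications (irreducibility of the conic through five points via no-three-collinear, and intersection number equals length) fill in details the paper leaves implicit. For (3), the paper uses exactly your \emph{alternative} geometric argument (the image of the unique conic through five points is the unique conic through their images, followed by strict transforms), while your primary argument via $\varphi_*$ on $\Pic(S)$ and uniqueness of a $(-1)$-curve in its class is an equally valid variant.
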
 
\begin{proof}
1) Since $[C] = 2L \in \Pic(S)$, one has 
\[
\deg C = (-K_S) \cdot C = (3L - \sum_{i=1}^6 E_i) \cdot 2L = 6.
\]
The invariance of $C$ under $\varphi$ follows from the invariance of $\overline{C}$ under $\overline{\varphi}$.

2) The formula for $[\ell_j]$ follows directly from the definition of $\ell_j$.  Then one has 
\[
\ell_j^2 = (2L)^2 + \sum_{i \neq j} E_i^2 = 4-5 = -1,
\]
\[
C \cdot \ell_j = 2L \cdot (2L - \sum_{i\neq j} E_i) = 4.
\]
The degree of $\ell_j$ in $\PP^3$ is
\[
(-K_S) \cdot \ell_j = (3L - \sum_{i=1}^6 E_i) \cdot (2L - \sum_{i \neq j} E_i) = 6-5 = 1, 
\]
so $\ell_j$ is a line.

3) The automorphism $\overline{\varphi}$ maps the unique conic through $\{p_i\}_{i\neq 1}$ to the unique conic through $\{p_i\}_{i\neq 2}$. Taking strict transforms yields $\varphi(\ell_1)=\ell_2$. The same argument gives $\varphi(\ell_3)=\ell_4$.   Since $\overline{\varphi}$ fixes $p_5$ and $p_6$, it preserves the unique conics through the five points different from $p_5$ and $p_6$, respectively.  Hence $\varphi(\ell_5)=\ell_5$ and $\varphi(\ell_6)=\ell_6$.
\end{proof}

Now let 
\[
\pi: X := \Bl_C \PP^3 \to \PP^3.
\]
Hence $X$ is a smooth projective threefold.  

\subsection{The exceptional collection and the spherical objects} Let $j: E \hookrightarrow X$ be the closed embedding of the exceptional divisor and let $p: E \to C$ be the projection.  Put $H = \pi^* \Oscr_{\PP^3}(1)$ and let 
\[
\Psi: \Dscr^b(C) \to \Dscr^b(X) 
\]
\[
\Psi(-) := j_*(p^*(-) \otimes \Oscr_E(-1))
\]
be the Fourier-Mukai functor with kernel $\Oscr_E(-1) = \Oscr_X(E)|_E$.   Fix a degree-one line bundle $F \cong \Oscr_{\PP^1}(1)$ under the identification $C \cong \PP^1$. 

Orlov's blow-up formula \cite[Theorem 4.3]{OrlovSOD} gives a semiorthogonal decomposition 
\[
\Dscr^b(X) = \langle \Psi(\Dscr^b(C)),  \pi^* \Dscr^b(\PP^3) \rangle
\]
and hence a full exceptional collection 
\begin{equation}\label{eq:FEC}
\EE := (\Psi(\Oscr_C), \Psi(F), \Oscr_X, \Oscr_X(H), \Oscr_X(2H),\Oscr_X(3H))
\end{equation}
obtained by combining the standard full exceptional collections \cite{Beilinson} on $\PP^3$ and $C \cong \PP^1$. 

Since $\varphi(C) = C$ by Lemma \ref{lem:before_bl}(1),  the involution $\varphi$ lifts uniquely to $X$.  We denote the lift by $\varphi_X$. 

\begin{lemma}\label{lem:phi_fixes} 
The involution $\varphi_X \in \Aut(X)$ fixes every object of $\EE$ up to an isomorphism.  In particular, $\varphi_X^*$ acts trivially on the Grothendieck group $\K_0(X)$. 
\end{lemma}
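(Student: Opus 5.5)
We check the six objects of $\EE$ one at a time, splitting them into the four line bundles pulled back from $\PP^3$ and the two objects $\Psi(\Oscr_C)$, $\Psi(F)$. Throughout, $\varphi_X$ commutes with $\pi$, that is, $\pi\circ\varphi_X = \varphi\circ\pi$, because $\varphi_X$ is the lift of $\varphi$ to the blow-up. Since $\varphi_X$ is an involution, $\varphi_X^* = (\varphi_X^{-1})_*$, so pull-back and push-forward agree up to relabelling, which we use freely.

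For the line bundles, $\varphi_X^*\Oscr_X(kH) = \varphi_X^*\pi^*\Oscr_{\PP^3}(k) \cong \pi^*\varphi^*\Oscr_{\PP^3}(k)$. Every automorphism of $\PP^3$ is linear, so $\varphi^*\Oscr_{\PP^3}(k)\cong\Oscr_{\PP^3}(k)$ (its degree is preserved and $\Pic\PP^3=\ZZ$). This settles $\Oscr_X(kH)$ for $k=0,1,2,3$.

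For $\Psi(\Oscr_C)$ and $\Psi(F)$, note that $\varphi_X$ preserves the exceptional divisor $E=\pi^{-1}(C)$, because $\varphi(C)=C$ by Lemma \ref{lem:before_bl}(1). It therefore restricts to an automorphism $\varphi_E$ of $E$ with $p\circ\varphi_E=\varphi_C\circ p$, where $\varphi_C:=\varphi|_C$, and with $j\circ\varphi_E=\varphi_X\circ j$. The divisor $E$ is $\varphi_X$-invariant, so $\varphi_X^*\Oscr_X(E)\cong\Oscr_X(E)$, and restricting to $E$ gives $\varphi_E^*\Oscr_E(-1)\cong\Oscr_E(-1)$. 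By flat base change for the cartesian square formed by the automorphisms $\varphi_X$ and $\varphi_E$ along $j$,
\[
\varphi_X^*\Psi(G)\cong j_*\varphi_E^*\bigl(p^*G\otimes\Oscr_E(-1)\bigr)\cong j_*\bigl(p^*\varphi_C^*G\otimes\Oscr_E(-1)\bigr)=\Psi(\varphi_C^*G).
\]
Equivalently, $\varphi_X^*\circ\Psi\cong\Psi\circ\varphi_C^*$. It remains to see that $\varphi_C^*$ fixes $\Oscr_C$ and $F$. On $C\cong\PP^1$ line bundles are classified by degree and pullback by an automorphism preserves degree, so $\varphi_C^*F\cong F$ and $\varphi_C^*\Oscr_C\cong\Oscr_C$.

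The K-theory statement then follows: the classes of the objects of a full exceptional collection generate $\K_0(X)$, and each class is fixed.

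The only step that needs care, and so the main obstacle, is the compatibility $\varphi_X^*\Psi\cong\Psi\varphi_C^*$. In particular one must check that $\Oscr_E(-1)$, the tautological bundle on $E=\PP(N_{C/\PP^3})$, is preserved. We handle this through invariance of the divisor $E$, as above, which avoids any question of linearizations. Everything else is degree bookkeeping on $\PP^3$ and $\PP^1$.
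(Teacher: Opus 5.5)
Your proof is correct and follows the paper's argument essentially step by step. Both use invariance of $\Oscr_X(kH)$ via $\pi\circ\varphi_X=\varphi\circ\pi$, the compatibility $\varphi_X^*\circ\Psi\cong\Psi\circ\varphi_C^*$, uniqueness of line bundles of a given degree on $C\cong\PP^1$, and the fact that the classes of $\EE$ span $\K_0(X)$; the only difference is that you justify $\varphi_E^*\Oscr_E(-1)\cong\Oscr_E(-1)$ explicitly, via $\Oscr_E(-1)=\Oscr_X(E)|_E$, invariance of $E$, and base change along $j$, where the paper simply asserts it.
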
 
\begin{proof}  One has 
\[
\varphi_X^* \Oscr_X(H) = \varphi_X^*\pi^* \Oscr_{\PP^3}(1) \cong \pi^*\varphi^* \Oscr_{\PP^3}(1) \cong \pi^* \Oscr_{\PP^3}(1) = \Oscr_X(H).
\] 

Taking tensor powers, we see that $\varphi_X$ fixes $\Oscr_X(mH)$ for every $m \in \ZZ$,  so in particular it fixes the last four objects of $\EE$. 

The involution $\varphi$ restricts to an automorphism $\varphi_C: C \to C$ of $C \cong \PP^1$.  One has $\varphi_C^* \Oscr_C \cong \Oscr_C$ and $\varphi_C^* F \cong F$,  since there is only one isomorphism class of degree-one line bundles on $\PP^1$.  Finally,  note that $\varphi_X$ preserves $E$, commutes with $p$ and preserves the tautological line bundle $\Oscr_E(-1)$.  So for any $B \in \Dscr^b(C)$ one has 
\[
\varphi_X^* \Psi(B) \cong j_* (p^* \varphi_C^* B \otimes \Oscr_E(-1)) = \Psi(\varphi_C^* B).
\]
Taking $B = \Oscr_C$ and $B = F$ gives the invariance of the first two objects of $\EE$.  Finally,  $\varphi_X^*$ acts trivially on the Grothendieck group since the classes of $\EE$ form a basis of $\K_0(X)$. 
\end{proof}

Let $\widetilde{S} \subset X$ and $\widetilde{\ell_j} \subset X$ denote the strict transforms of $S$ and $\ell_j$ respectively.  Since $S$ is a cubic surface and $S$ contains $C$ with multiplicity one,  $\widetilde{S} \in |3H - E|$.  Moreover,
$\widetilde S\cong \Bl_C S\simeq S$ since $C$ is a Cartier divisor on the smooth surface $S$.

\begin{lemma}\label{lem:general_sph}
Let $X$ be a smooth projective threefold and let
$\iota: \ell\hookrightarrow X$ be a smooth rational curve such that
\[
N_{\ell/X}\cong
\Oscr_{\PP^1}(-1)\oplus\Oscr_{\PP^1}(-1).
\]
Then $K_X \cdot \ell = 0$ and for every line bundle $L$ on $\ell$,  the object
$\iota_*L\in\Dscr^b(X)$ is $3$-spherical.
\end{lemma}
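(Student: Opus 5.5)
First, $K_X\cdot\ell=0$ follows from adjunction. The normal bundle sequence $0\to T_\ell\to T_X|_\ell\to N_{\ell/X}\to 0$ gives $\det T_X|_\ell\cong T_\ell\otimes\det N_{\ell/X}\cong\Oscr_{\PP^1}(2)\otimes\Oscr_{\PP^1}(-2)\cong\Oscr_{\PP^1}$. Hence $-K_X\cdot\ell=\deg(\det T_X|_\ell)=0$.

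For sphericity we must check two conditions. The first is $\iota_*L\otimes\omega_X\cong\iota_*L$. By the projection formula, $\iota_*L\otimes\omega_X\cong\iota_*(L\otimes\omega_X|_\ell)$. Since $\omega_X|_\ell$ is a degree-zero line bundle on $\PP^1$, it is trivial, so this condition holds.

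The second condition is $\Ext^k_X(\iota_*L,\iota_*L)\cong\CC$ for $k=0,3$ and zero otherwise. Adjunction gives $\RHom_X(\iota_*L,\iota_*L)\cong\RHom_\ell(L\iota^*\iota_*L,L)$. Since $\ell$ is a smooth subvariety of the smooth variety $X$, the cohomology sheaves satisfy $\Hscr^{-q}(L\iota^*\iota_*L)\cong L\otimes\Lambda^qN^\vee_{\ell/X}$. This yields the local-to-global spectral sequence
\[
E_2^{p,q}=\H^p(\ell,\Lambda^qN_{\ell/X})\Rightarrow\Ext^{p+q}_X(\iota_*L,\iota_*L),
\]
in which $L$ cancels from $\mathcal{H}om(L\otimes\Lambda^qN^\vee,L)$.

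We now compute the terms with $N=\Oscr(-1)^{\oplus2}$:
\begin{itemize}
\item $q=0$: $\H^*(\Oscr_{\PP^1})$ is $\CC$ in degree $0$.
\item $q=1$: $\H^*(\Oscr(-1)^{\oplus2})=0$.
\item $q=2$: $\H^*(\Oscr(-2))$ is $\CC$ in degree $p=1$.
\end{itemize}
So the only nonzero terms are $E_2^{0,0}=\CC$ and $E_2^{1,2}=\CC$, in total degrees $0$ and $3$. All differentials vanish for degree reasons: each would have to connect a nonzero term to a zero term. Therefore $\Ext^*$ is $\CC$ in degrees $0$ and $3$ and zero otherwise.

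The main point needing care is this local-to-global spectral sequence. In particular, the identification of $\Hscr^{-q}(L\iota^*\iota_*O_\ell)$ with $\Lambda^qN^\vee$ is standard (Koszul resolution locally). Alternatively, degeneration is automatic here because only two nonzero terms appear, in total degrees differing by $3$. Having $\Ext^3\cong\CC$ together with the first condition, the object $\iota_*L$ is $3$-spherical.
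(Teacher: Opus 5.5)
Your proof is correct and follows the paper's argument: adjunction gives $\omega_X|_\ell\cong\Oscr_\ell$ (you use the tangent-bundle sequence, the paper the dual canonical-bundle formula), the projection formula gives $\iota_*L\otimes\omega_X\cong\iota_*L$, and the local-to-global spectral sequence $E_2^{p,q}=\H^p(\ell,\wedge^q N_{\ell/X})$ has only $E_2^{0,0}$ and $E_2^{1,2}$ non-zero. The one cosmetic difference is how you reach that spectral sequence: you go through the adjunction $\RHom_X(\iota_*L,\iota_*L)\cong\RHom_\ell(L\iota^*\iota_*L,L)$ and the Koszul description of $L\iota^*\iota_*$, while the paper computes the local Ext sheaves $\mathcal{E}xt^k_X(\Sscr,\Sscr)\cong\iota_*\wedge^k N_{\ell/X}$ directly.
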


\begin{proof} 
By adjunction $\omega_{\ell} \cong \omega_X|_{\ell} \otimes \det N_{\ell/X}$. We have $\omega_{\ell}  \cong \Oscr_{\PP^1}(-2)$ and $\det N_{\ell/X} \cong \Oscr_{\PP^1}(-2)$, so 
$\omega_X|_{\ell} \cong \Oscr_{\ell}$,  equivalently,  $K_X \cdot \ell = 0$.  Now let $\Sscr := \iota_*L$ for $L$ a line bundle on $\ell$. The projection formula then gives 
\[
\Sscr \otimes \omega_X = \iota_*(L \otimes \omega_X|_{\ell}) \cong  \iota_*L = \Sscr.
\]

Next we compute the self-Ext groups of $\Sscr$.  First we do it locally.  Since
$\ell$ is smooth of codimension
two,  a local Koszul resolution gives for $k =0,1,2$
\[
{\mathcal Ext}_X^k(\Sscr,\Sscr)
\cong 
\iota_*(\wedge^k N_{\ell/X}),
\]
while the higher sheaf Ext groups vanish.  Indeed,  trivialising $L$
locally on $\ell$ and applying
${\mathcal Hom}_X(-,\Sscr)$ to the Koszul resolution makes all differentials vanish because the local equations cutting out $\ell$ act trivially on the $\Oscr_X$-module $\Sscr$.

Now we apply the local-to-global Ext spectral sequence,  which takes the form
\[
E_2^{p,q}
=
H^p(\ell,\wedge^q N_{\ell/X})
\Longrightarrow
\Ext^{p+q}_X(\Sscr,\Sscr).
\]
We have
\[
\wedge^0 N_{\ell/X} \cong \Oscr_{\PP^1},\quad
\wedge^1 N_{\ell/X} \cong \Oscr_{\PP^1}(-1)^{\oplus 2},
\quad
\wedge^2N_{\ell/X} \cong \Oscr_{\PP^1}(-2).
\]
Therefore the only non-zero terms on the $E_2$-page are
\[
E_2^{0,0}=H^0(\PP^1,\Oscr)=\CC \text{ and }
E_2^{1,2}=H^1(\PP^1,\Oscr(-2))=\CC.
\]
No non-zero differentials involving these two terms can exist, so
the spectral sequence degenerates and yields 
\[
\Ext^k_X(\Sscr,\Sscr)
\cong
\begin{cases}
\CC,& k=0,3,\\
0,& \text{otherwise}.
\end{cases}
\]

Together with $\Sscr\otimes\omega_X \cong \Sscr$ this proves that $\Sscr$ is spherical.  

\end{proof} 
Denote $\Sscr_j:= \Oscr_{\widetilde{\ell_j}}(-1)$. 

\begin{corollary}\label{cor:spherical} For every $1 \leq j \leq 6$, every line bundle on $\widetilde{\ell_j}$ is a $3$-spherical object in $\Dscr^b(X)$.  In particular,  every $\Sscr_j$ is spherical.  Moreover,  
\[
\varphi_X^*(\Sscr_1)\cong\Sscr_2,
\quad
\varphi_X^*(\Sscr_3)\cong\Sscr_4,
\quad
\varphi_X^*(\Sscr_5)\cong\Sscr_5,
\quad
\varphi_X^*(\Sscr_6)\cong\Sscr_6.
\] \end{corollary}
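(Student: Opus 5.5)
The plan is to reduce everything to Lemma \ref{lem:general_sph}. For this we must show that each $\widetilde{\ell_j}$ is a smooth rational curve with $N_{\widetilde{\ell_j}/X}\cong\Oscr(-1)\oplus\Oscr(-1)$. Smoothness and rationality are clear, since $\widetilde{\ell_j}\cong\ell_j\cong\PP^1$. The key is the chain of inclusions $\widetilde{\ell_j}\subset\widetilde S\subset X$. It gives a short exact sequence of normal bundles
\[
0\to N_{\widetilde{\ell_j}/\widetilde S}\to N_{\widetilde{\ell_j}/X}\to N_{\widetilde S/X}|_{\widetilde{\ell_j}}\to 0 .
\]
Both outer terms can be computed explicitly, and then we show the sequence splits.

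First, under $\widetilde S\cong S$ the curve $\widetilde{\ell_j}$ corresponds to $\ell_j$. This curve is a $(-1)$-curve by Lemma \ref{lem:before_bl}(2), so $N_{\widetilde{\ell_j}/\widetilde S}\cong\Oscr_{\PP^1}(-1)$.

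Second, $\widetilde S\in|3H-E|$, so $N_{\widetilde S/X}|_{\widetilde{\ell_j}}=\Oscr_X(3H-E)|_{\widetilde{\ell_j}}$, which has degree $3H\cdot\widetilde{\ell_j}-E\cdot\widetilde{\ell_j}$.
\begin{itemize}
\item $H\cdot\widetilde{\ell_j}=1$, because $\ell_j$ is a line.
\item $E\cdot\widetilde{\ell_j}$ is the length of $\ell_j\cap C$ computed scheme-theoretically. Since $\ell_j$ is not contained in $C$ and both curves lie on the smooth surface $S$, this should equal the intersection number $C\cdot\ell_j=4$ on $S$. One way to see it is that $E|_{\widetilde S}$ is identified with $C$ as a divisor on $\widetilde S\cong S$, so $E\cdot\widetilde{\ell_j}=(E|_{\widetilde S}\cdot\widetilde{\ell_j})_{\widetilde S}=C\cdot\ell_j$.
\end{itemize}
Hence the degree is $3-4=-1$, and the sequence reads $0\to\Oscr(-1)\to N\to\Oscr(-1)\to0$. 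It splits because $\Ext^1(\Oscr(-1),\Oscr(-1))=H^1(\PP^1,\Oscr)=0$. This gives $N_{\widetilde{\ell_j}/X}\cong\Oscr(-1)^{\oplus2}$.

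Lemma \ref{lem:general_sph} now shows that every line bundle on $\widetilde{\ell_j}$ is $3$-spherical, in particular each $\Sscr_j$. The main point needing care is the identification $E|_{\widetilde S}=C$ under $\widetilde S\cong S$, i.e.\ that the exceptional divisor of $\Bl_C S\to S$ is $C$ itself. This holds because $C$ is Cartier on $S$, so the blow-up is an isomorphism, and $E\cap\widetilde S$ is the exceptional locus of that blow-up.

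For the equivariance, $\varphi_X$ is an involution lifting $\varphi$ with $\varphi(S)=S$, so it maps strict transforms to strict transforms. By Lemma \ref{lem:before_bl}(3) we have $\varphi(\ell_1)=\ell_2$, hence $\varphi_X(\widetilde{\ell_1})=\widetilde{\ell_2}$. Since $\varphi_X$ is an involution, $\varphi_X^*\Oscr_{\widetilde{\ell_1}}(-1)=(\varphi_X^{-1})_*\Oscr_{\widetilde{\ell_1}}(-1)=(\varphi_X)_*\Oscr_{\widetilde{\ell_1}}(-1)$. This is the pushforward of a degree $-1$ line bundle from $\widetilde{\ell_1}\cong\PP^1$ to $\widetilde{\ell_2}$, i.e.\ a line bundle of degree $-1$ on $\widetilde{\ell_2}\cong\PP^1$. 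It is therefore isomorphic to $\Oscr_{\widetilde{\ell_2}}(-1)=\Sscr_2$, because line bundles on $\PP^1$ are determined by their degree. The cases of $\ell_3,\ell_4$ and of the fixed lines $\ell_5,\ell_6$ are identical.
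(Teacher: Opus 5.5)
Your proposal is correct and follows essentially the same route as the paper. Both use the normal bundle sequence for $\widetilde{\ell_j}\subset\widetilde S\subset X$, the degree count $(3H-E)\cdot\widetilde{\ell_j}=3-4=-1$, splitting via $\H^1(\PP^1,\Oscr_{\PP^1})=0$, Lemma~\ref{lem:general_sph}, and transport of $\widetilde{\ell_1}$ to $\widetilde{\ell_2}$ by the involution $\varphi_X$; your justifications of $E\cdot\widetilde{\ell_j}=4$ via $E|_{\widetilde S}=C$ on $\widetilde S\cong S$ and of the twist by $-1$ via degrees are slightly more detailed versions of the paper's remarks $\Oscr_X(E)|_{\widetilde{\ell}}\cong\Oscr_\ell(C\cap\ell)$ and $\varphi_X^*\Oscr_{\widetilde{\ell_1}}\cong\Oscr_{\widetilde{\ell_2}}$.
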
 

\begin{proof} Fix $j$ and let $\ell := \ell_j$, $\widetilde{\ell} := \widetilde{\ell_j}$.  The scheme-theoretic intersection $\ell\cap C$ is an effective Cartier
divisor (of length $4$,  see Lemma \ref{lem:before_bl}) on the smooth curve $\ell\simeq\PP^1$.  So
the restriction $\pi|_{\widetilde\ell}: \widetilde{\ell} \to \ell$
is an isomorphism and $\widetilde{\ell} \cong \PP^1$.

Now consider the short exact sequence 
\begin{equation}\label{eq:norm}
0 \to N_{\widetilde{\ell}/\widetilde{S}} \to N_{\widetilde{\ell}/X} \to N_{\widetilde{S}/X}|_{\widetilde{\ell}} \to 0.
\end{equation}
Under the isomorphism $\widetilde{S} \cong S$ the curve $\widetilde{\ell}$ maps to the $(-1)$-curve $\ell \subset S$ (Lemma \ref{lem:before_bl}),  thus $N_{\widetilde{\ell}/\widetilde{S}} \cong \Oscr_{\widetilde{\ell}}(\widetilde{\ell}) \cong \Oscr_{\PP^1}(-1)$.  Since $N_{\widetilde{S}/X} \cong \Oscr_X(\widetilde{S})|_{\widetilde{S}}$, one has 
\[
\deg N_{\widetilde{S}/X}|_{\widetilde{\ell}}  = \widetilde{S} \cdot \widetilde{\ell} = (3H - E) \cdot  \widetilde{\ell} = 3-4 = -1.
\]
Here we have used $H \cdot \widetilde{\ell}  = 1$ and $E \cdot  \widetilde{\ell} = \length(C \cap \ell)= C \cdot_S \ell = 4$, because $\Oscr_X(E)|_{\widetilde{\ell}} \cong \Oscr_\ell(C \cap \ell)$.
Since $\widetilde{\ell} \cong \PP^1$, this gives $N_{\widetilde{S}/X}|_{\widetilde{\ell}} \cong \Oscr_{\PP^1}(-1)$.  Then the sequence \eqref{eq:norm} splits since 
\[
\Ext^1_{\PP^1}(\Oscr(-1),\Oscr(-1)) = \H^1(\PP^1,\Oscr) = 0.
\]
Therefore we get $N_{\widetilde{\ell}/X} \cong \Oscr_{\PP^1}(-1) \oplus \Oscr_{\PP^1}(-1)$.  Now apply Lemma \ref{lem:general_sph}. 

The isomorphism $\varphi_X^* \Sscr_1 \cong \Sscr_2$ follows from $\varphi_X^* \Oscr_{\widetilde{\ell_1}} \cong \Oscr_{\varphi_X^{-1}(\widetilde{\ell_1})}$ and the fact that $\varphi_X$ is an involution, hence $\varphi_X^{-1}(\widetilde{\ell_1})=\widetilde{\ell_2}$.  The other three isomorphisms follow in the same way from Lemma \ref{lem:before_bl}(3). 
\end{proof}

\begin{lemma}\label{lem:K0same}
The six spherical objects $\Sscr_1,\dots,\Sscr_6$ have the same class in $K_0(X)$.  Consequently, the spherical twists $T_{\Sscr_1},\dots,T_{\Sscr_6}$ induce the same
automorphism of $K_0(X)$.
\end{lemma}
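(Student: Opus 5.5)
Since $K_0(X)$ coincides with $\K_{num}(X)$ (as $X$ has a full exceptional collection), it suffices to show that the classes $[\Sscr_j]$ are numerically equivalent. Concretely, I will check that $\chi(\Escr_i,\Sscr_j)$ is independent of $j$ for each object $\Escr_i$ of the basis $\EE$ from \eqref{eq:FEC}. Because the Euler pairing with a full exceptional collection is nondegenerate, this determines the class.

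\emph{Equivalent formulation via Riemann--Roch.} Alternatively, one can use Grothendieck--Riemann--Roch. The Chern character of $\iota_*\Oscr_{\widetilde\ell_j}(-1)$ is $[\widetilde\ell_j]+(\chi(\Oscr_{\PP^1}(-1)) - \tfrac12\, c_1(\omega_X)\cdot\widetilde\ell_j \text{ correction})[\mathrm{pt}]$. By Lemma \ref{lem:general_sph}, $K_X\cdot\widetilde\ell_j=0$, so the point term equals $\chi(\Oscr_{\PP^1}(-1))=0$ for every $j$. Hence $\ch(\Sscr_j)=[\widetilde\ell_j]\in H^4(X)$, and it remains to show that the curve classes $[\widetilde\ell_j]\in N_1(X)$ coincide.

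\emph{Curve classes.} Since $\Pic(X)=\ZZ H\oplus\ZZ E$, the group $N_1(X)$ is dual to it. Each curve class is therefore determined by its intersections with $H$ and $E$. From the proof of Corollary \ref{cor:spherical}, $H\cdot\widetilde\ell_j=1$ and $E\cdot\widetilde\ell_j=4$ for every $j$. So all the $[\widetilde\ell_j]$ are equal, and therefore so are the Chern characters. Since the Chern character is injective on $K_0(X)\otimes\mathbb{Q}$ here and $K_0(X)$ is torsion-free (it is free on $\EE$), the classes $[\Sscr_j]$ agree.

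If one prefers to avoid GRR, the direct version of the same argument is to compute $\chi(\Oscr_X(mH),\Sscr_j)=\chi(\Oscr_{\PP^1}(m-1))=m$. For the two $\Psi$-objects, one uses adjunction, or Serre duality together with $\Psi$'s right adjoint. This reduces to computing $\chi$ on $C$ of restrictions along the length-$4$ scheme $\widetilde\ell_j\cap E$, and the answer depends only on the degrees $1$ and $4$. This verification is the only mildly technical step; the GRR route handles it uniformly.

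\emph{Induced automorphisms.} For the consequence, recall that for a spherical object $\Sscr$ the twist acts on $K_0$ by $[A]\mapsto[A]-\chi(\Sscr,A)[\Sscr]$. This formula depends only on the class $[\Sscr]$, so $T_{\Sscr_1},\dots,T_{\Sscr_6}$ act identically on $K_0(X)$. The main obstacle is justifying that the numerical class is determined by the curve class. I would handle this via GRR together with the vanishing $K_X\cdot\widetilde\ell_j=0$ from Lemma \ref{lem:general_sph}, which kills the only $j$-dependent correction term.
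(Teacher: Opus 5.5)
Your proposal is correct and follows essentially the same route as the paper. Both arguments use Grothendieck--Riemann--Roch to get $\ch(\Sscr_j)=[\widetilde{\ell_j}]$, and then observe that these curve classes coincide because $H\cdot\widetilde{\ell_j}=1$ and $E\cdot\widetilde{\ell_j}=4$ for every $j$. The point term in your formula should read $\chi(F)+\tfrac12 K_X\cdot\widetilde{\ell_j}$, not $-\tfrac12 K_X\cdot\widetilde{\ell_j}$; this sign slip is harmless since $K_X\cdot\widetilde{\ell_j}=0$. Your explicit formula $[T_{\Sscr}A]=[A]-\chi(\Sscr,A)[\Sscr]$ supplies the ``consequently'' part, which the paper leaves implicit.
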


\begin{proof}
Let $\iota_j\colon\widetilde{\ell_j}\hookrightarrow X$ denote the
inclusion.  Since $N_{\widetilde{\ell_j}/X}\cong\Oscr_{\PP^1}(-1)^{\oplus2}$ one has
\[
\ch(\Sscr_j) =
(\iota_j)_*
\left(
\ch\Oscr_{\widetilde{\ell_j}}(-1)
\td (N_{\widetilde{\ell_j}/X})^{-1}
\right) =
[\widetilde{\ell_j}]
\]
 by Grothendieck-Riemann-Roch.  Moreover, $H\cdot\widetilde{\ell_j}=1$ and $E\cdot\widetilde{\ell_j}=4$ for all $j$.  Since $H$ and $E$ span $\Pic(X)$,  the six curves $\widetilde{\ell_j}$ have the same numerical class. Therefore $[\Sscr_1]=\cdots=[\Sscr_6]$ in $\K_{num}(X) \cong \K_0(X)$.
\end{proof}

\begin{remark} One has $-K_X \cdot \widetilde{\ell_j} = 0$,  so in particular $X$ is not Fano.  It is in fact weak Fano and appears in Blanc-Lamy's classification of weak Fano blow-ups of curves contained in cubic surfaces \cite[Proposition 4.2(ii)]{BlancLamy},  precisely as the case $(g,d,k;m_1,\dots,m_6) = (0,6,2;0,\dots,0)$.  \end{remark} 

\begin{lemma}\label{lem:two_spherical}
For every $j\in\{1,\dots,6\}$, the object $T_{\Sscr_j}(\Oscr_X(H))$ has exactly two non-zero cohomology sheaves, namely
\[
\Hscr^0\bigl(T_{\Sscr_j}(\Oscr_X(H))\bigr) \cong \Oscr_X(H),
\text { and }
\Hscr^1\bigl(T_{\Sscr_j}(\Oscr_X(H))\bigr) \cong \Sscr_j.
\]
In particular, if $i\neq j$, then $T_{\Sscr_i}(\Oscr_X(H))$ is not isomorphic to a shift of $T_{\Sscr_j}(\Oscr_X(H))$.
\end{lemma}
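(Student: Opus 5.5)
We compute the twist directly from the defining triangle
\[
\RHom_X(\Sscr_j,\Oscr_X(H))\otimes\Sscr_j \to \Oscr_X(H) \to T_{\Sscr_j}(\Oscr_X(H)),
\]
so the first task is to determine $\RHom_X(\Sscr_j,\Oscr_X(H))$.

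\textbf{Step 1: the Hom complex.} By Serre duality, using that $\Sscr_j\otimes\omega_X\cong\Sscr_j$ (Lemma \ref{lem:general_sph}),
\[
\Ext^k_X(\Sscr_j,\Oscr_X(H))\cong \Ext^{3-k}_X(\Oscr_X(H),\Sscr_j)^\vee
=\H^{3-k}(\widetilde{\ell_j},\Oscr_{\widetilde{\ell_j}}(-1)\otimes\Oscr_X(-H)|_{\widetilde{\ell_j}})^\vee .
\]
Since $H\cdot\widetilde{\ell_j}=1$, the restriction is $\Oscr_{\PP^1}(-2)$. Its only cohomology is $\H^1=\CC$, so $\Ext^2_X(\Sscr_j,\Oscr_X(H))\cong\CC$ and all other Ext groups vanish. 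Hence $\RHom_X(\Sscr_j,\Oscr_X(H))\cong\CC[-2]$, and the triangle becomes
\[
\Sscr_j[-2]\to\Oscr_X(H)\to T_{\Sscr_j}(\Oscr_X(H)).
\]

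\textbf{Step 2: cohomology sheaves.} Rotating the triangle gives
\[
\Oscr_X(H)\to T\to \Sscr_j[-1],
\]
where $T:=T_{\Sscr_j}(\Oscr_X(H))$. The long exact sequence of cohomology sheaves involves $\Oscr_X(H)$, a sheaf in degree $0$, and $\Sscr_j[-1]$, a sheaf in degree $1$. Therefore $\Hscr^0(T)\cong\Oscr_X(H)$, $\Hscr^1(T)\cong\Sscr_j$, and all other cohomology sheaves vanish. The connecting map goes from degree $1$ to degree $1$ of $\Oscr_X(H)[1]$, which is zero, so it imposes no constraint. The one point to be careful about is the sign and shift convention in the rotation. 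The object sits as an extension classified by the nonzero element of $\Ext^2(\Sscr_j,\Oscr_X(H))$, which makes $T$ a genuinely non-split two-term complex. This does not matter for the claim, since only the cohomology sheaves are asserted.

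\textbf{Step 3: distinguishing $i\neq j$.} Suppose $T_{\Sscr_i}(\Oscr_X(H))\cong T_{\Sscr_j}(\Oscr_X(H))[m]$. Comparing the degrees of the nonzero cohomology sheaves, both in $\{0,1\}$, forces $m=0$. Then $\Hscr^1$ gives $\Sscr_i\cong\Sscr_j$, and comparing supports gives $\widetilde{\ell_i}=\widetilde{\ell_j}$. But the $\ell_i$ are distinct curves, being strict transforms of distinct conics, so $i=j$. Alternatively, one can compare the supports of the torsion parts directly.

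The computation is straightforward; the only step that needs care is the Serre-duality bookkeeping in Step 1 that pins down the degree $2$.
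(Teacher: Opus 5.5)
Your proof is correct and is essentially the paper's own argument: Serre duality gives $\RHom_X(\Sscr_j,\Oscr_X(H))\cong\CC[-2]$, the long exact sequence of cohomology sheaves of the twist triangle does the rest, and the distinct supports of $\Sscr_i$ and $\Sscr_j$ separate the twists. Your explicit check that a shift $T_{\Sscr_i}(\Oscr_X(H))\cong T_{\Sscr_j}(\Oscr_X(H))[m]$ forces $m=0$, by comparing cohomological degrees, spells out a step the paper leaves implicit.
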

\begin{proof}
Fix $j\in\{1,\dots,6\}$. Since $\Sscr_j$ is spherical, Serre duality gives
\[
\Ext^k_X(\Sscr_j, \Oscr_X(H)) \cong \Ext_X^{3-k}(\Oscr_X(H), \Sscr_j)^\vee \cong \H^{3-k}(\PP^1, \Oscr_{\PP^1}(-2))^\vee.
\]

For the last isomorphism, we have used 
\[
\RHom_X(\Oscr_X(H),\Sscr_j) \cong \R \Gamma(\widetilde{\ell_j}, \Oscr_X(-H)|_{\widetilde{\ell_j}} \otimes \Oscr_{\widetilde{\ell_j}}(-1))
\] by the projection formula and $\Oscr_X(H)|_{\widetilde{\ell_j}} \cong \Oscr_{\PP^1}(1)$, so $\Oscr_X(-H)|_{\widetilde{\ell_j}} \otimes \Oscr_{\widetilde{\ell_j}}(-1) \cong \Oscr_{\PP^1}(-1) \otimes \Oscr_{\PP^1}(-1) \cong \Oscr_{\PP^1}(-2)$. 

Thus $\Ext^k_X(\Sscr_j, \Oscr_X(H))$ is one-dimensional for $k=2$ and vanishes otherwise. Hence the triangle defining $T_{\Sscr_j}(\Oscr_X(H))$ is
\[
\Sscr_j[-2] \to \Oscr_X(H) \to T_{\Sscr_j}(\Oscr_X(H)) \to \Sscr_j[-1],
\]
and the long exact sequence of cohomology yields the claim. Note that $\Sscr_i\cong\Sscr_j$ for $i \neq j$ is impossible because these sheaves have different supports.
\end{proof} 

\section{Infinitely many braid group orbits}
Let $\Phi := \varphi_X^*$ be the autoequivalence of $\Dscr^b(X)$ induced by $\varphi_X$. 

In order to illustrate the general idea in the simplest possible form,  first we observe that there are at least two orbits,  which already gives a smooth projective counterexample to \cite[Conjecture 2.2]{BondalPolishchuk}: 

\begin{theorem}\label{thm:twoorb} Let $X = \Bl_C \PP^3$ be as constructed above,  let $\EE$ be the exceptional collection \eqref{eq:FEC} and $\Sscr_1 = \Oscr_{\widetilde{\ell_1}}(-1)$.  Then the exceptional collections $\EE$ and $T_{\Sscr_1}(\EE)$ are not in the same $B_6 \ltimes \ZZ^6$-orbit.  In particular,  the action of $B_6 \ltimes \ZZ^6$ by mutations and shifts on the set of full exceptional collections is not transitive. \end{theorem}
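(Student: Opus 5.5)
The plan is to apply Lemma \ref{lem:strategy} with $\Phi := \varphi_X^*$, $\Sscr := \Sscr_1$, and $j = 4$, so that $\Escr_4 = \Oscr_X(H)$ in the collection \eqref{eq:FEC}. Everything required has already been established, so the proof amounts to checking the two hypotheses of that lemma.

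First, hypothesis (1) asks that $\Phi(\Escr_i) \cong \Escr_i$ for every $i$, with no shifts. This is exactly Lemma \ref{lem:phi_fixes}: $\varphi_X^*$ fixes $\Psi(\Oscr_C)$, $\Psi(F)$ and $\Oscr_X(mH)$ for $m=0,1,2,3$. The sheaf $\Sscr_1$ is spherical by Corollary \ref{cor:spherical}, so the twist $T_{\Sscr_1}$ is defined. By the same corollary, $\Phi(\Sscr_1) = \varphi_X^*\Sscr_1 \cong \Sscr_2$.

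Second, hypothesis (2) asks that $T_{\Sscr_1}(\Oscr_X(H))$ not be isomorphic to any shift of $T_{\Phi(\Sscr_1)}(\Oscr_X(H)) \cong T_{\Sscr_2}(\Oscr_X(H))$. This is the final assertion of Lemma \ref{lem:two_spherical} with $i=1$ and $j=2$. The argument there is worth recalling, since it is the only step with real content. The twist $T_{\Sscr_j}(\Oscr_X(H))$ has cohomology sheaves $\Oscr_X(H)$ in degree $0$ and $\Sscr_j$ in degree $1$. A shift would move the degrees of these cohomology sheaves. An isomorphism $T_{\Sscr_1}(\Oscr_X(H)) \cong T_{\Sscr_2}(\Oscr_X(H))[k]$ therefore forces $k=0$, and then comparing $\Hscr^1$ gives $\Sscr_1 \cong \Sscr_2$. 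That is impossible, because the supports $\widetilde{\ell_1} \neq \widetilde{\ell_2}$ differ.

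With both hypotheses verified, Lemma \ref{lem:strategy} shows that $\EE$ and $T_{\Sscr_1}(\EE)$ lie in different $B_6\ltimes\ZZ^6$-orbits. Since $T_{\Sscr_1}$ is an autoequivalence, $T_{\Sscr_1}(\EE)$ is again a full exceptional collection of length $6$. Hence the action on full exceptional collections is not transitive.
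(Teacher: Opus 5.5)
Your proposal is correct and is essentially the paper's own proof. The paper likewise applies Lemma \ref{lem:strategy} with $\Phi=\varphi_X^*$, $\Sscr=\Sscr_1$ and $j=4$. It verifies the hypotheses using Lemma \ref{lem:phi_fixes}, Corollary \ref{cor:spherical} (which gives $\Phi(\Sscr_1)\cong\Sscr_2$) and Lemma \ref{lem:two_spherical}.
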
  

\begin{proof} 
By Lemma \ref{lem:phi_fixes} $\Phi$ fixes every object of $\EE$.  By Lemma \ref{lem:two_spherical} $T_{\Sscr_1}(\Oscr_X(H))$ is not isomorphic to a shift of $T_{\Sscr_2}(\Oscr_X(H))$.  By Corollary \ref{cor:spherical} $\Sscr_2 \cong \Phi(\Sscr_1)$.  Applying Lemma \ref{lem:strategy} to $X$, $\EE$, $\Phi$, $\Sscr = \Sscr_1$ and $j = 4$ finishes the proof. 
\end{proof} 

Now we will refine this result, proving in particular that there are in fact infinitely many $B_6 \ltimes \ZZ^6$-orbits. 

For $j\in\{1,\dots,6\}$ denote $T_j:=T_{\Sscr_j}$. 

If $i\neq j$, then
\[
\ell_i\cdot \ell_j
=
(2L-\sum_{k\neq i}E_k)\cdot
(2L-\sum_{k\neq j}E_k)
=4-4=0.
\]
So $\ell_i$ and $\ell_j$ are disjoint and hence their strict transforms $\widetilde{\ell_i}$ and $\widetilde{\ell_j}$ are also disjoint.  Thus $\RHom_X(\Sscr_i,\Sscr_j)=0$ for $i \neq j$,  so the twists $T_1,\dots,T_6$ commute (see \cite[Proposition 2.12]{SeidelThomas}).  

For $\aa=(a_1,a_2,a_3,a_4,a_5,a_6)\in\ZZ^6$ we put
\[
\Tscr_{\aa}:=T_1^{a_1}T_2^{a_2}T_3^{a_3}T_4^{a_4}T_5^{a_5}T_6^{a_6}
\]
and let $\EE_{\aa}$ be the full exceptional collection obtained by applying $\Tscr_{\aa}$ to $\EE$: 
\[
\EE_{\aa}:=\Tscr_{\aa}(\EE).
\]

By Corollary~\ref{cor:spherical},
\begin{equation}\label{eq:conj_twists}
\begin{aligned}
\Phi T_1\Phi^{-1}&\cong T_2,
&\Phi T_2\Phi^{-1}&\cong T_1,
&\Phi T_3\Phi^{-1}&\cong T_4,\\
\Phi T_4\Phi^{-1}&\cong T_3,
&\Phi T_5\Phi^{-1}&\cong T_5,
&\Phi T_6\Phi^{-1}&\cong T_6.
\end{aligned}
\end{equation}

For $(r,s)\in\ZZ^2$,  we write $\underline{(r,s)}$ for the tuple $(r,-r,s,-s,0,0) \in \ZZ^6$,  so
\[
\Tscr_{\underline{(r,s)}} =\Tscr_{(r,-r,s,-s,0,0)} = T_1^rT_2^{-r}T_3^sT_4^{-s}.
\]

\begin{lemma}\label{lem:detect}
Let $a,b,k\in\ZZ$.  If $\Tscr_{\underline{(a,b)}}(\Oscr_X(H))\cong\Oscr_X(H)[k]$,  then $a=b=k=0$.
\end{lemma}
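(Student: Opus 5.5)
Since the curves $\widetilde{\ell_1},\dots,\widetilde{\ell_6}$ are pairwise disjoint, I will work locally near each curve. The plan is to compute $T_j^{n}(\Oscr_X(H))$ explicitly for every $n\in\ZZ$, and then check that the four twists in $\Tscr_{\underline{(a,b)}}$ modify $\Oscr_X(H)$ independently near the four curves $\widetilde{\ell_1},\dots,\widetilde{\ell_4}$. The precise claim I aim for is the following. For $\Fscr:=\Tscr_{\underline{(a,b)}}(\Oscr_X(H))$ and each $j$, the restriction of $\Fscr$ to $U_j:=X\setminus\bigcup_{i\neq j}\widetilde{\ell_i}$ agrees with $T_j^{n_j}(\Oscr_X(H))|_{U_j}$, where $n_1=a$, $n_2=-a$, $n_3=b$, $n_4=-b$. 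The reason is that for $i\neq j$ the kernel of $T_i$ is $\Oscr_\Delta$ modified by an object supported on $\widetilde{\ell_i}\times\widetilde{\ell_i}$. Away from $\widetilde{\ell_i}$, $T_i$ therefore acts as the identity on restrictions, and this is compatible with composing the twists.

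Next I determine $T_j^{n}(\Oscr_X(H))$ by induction on $n$, using that everything stays inside the category generated by $\Oscr_X(H)$ and $\Sscr_j$. Lemma \ref{lem:two_spherical} gives the case $n=1$: there is a triangle $\Sscr_j[-2]\to\Oscr_X(H)\to T_j(\Oscr_X(H))$. Applying $T_j$ and using $T_j(\Sscr_j)\cong\Sscr_j[-2]$, I expect $T_j^n(\Oscr_X(H))$ to be an iterated extension with $\Hscr^0\cong\Oscr_X(H)$ and $\Hscr^{m}$ built from $\Sscr_j$ in degrees up to about $2n-1$, and in particular nonzero in some degree $\geq 1$. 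For $n<0$ the dual triangle for $T_j^{-1}$ applies: $\Hom(\Oscr_X(H),\Sscr_j)$ lives in degree $1$ by the computation in Lemma \ref{lem:two_spherical}, so $T_j^{-1}(\Oscr_X(H))$ is the cone of $\Oscr_X(H)\to\Sscr_j[1]$ shifted. This gives cohomology $\Oscr_X(H)$ in degree $0$ and something supported on $\widetilde{\ell_j}$ in degree $-1$, or a similar configuration in negative degrees.

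The invariant I will use is the following. For $n\neq 0$, $T_j^n(\Oscr_X(H))$ is not a shifted sheaf: it has nonzero cohomology both in the degree where $\Hscr\cong\Oscr_X(H)$, namely degree $0$, and in some other degree, with that other sheaf supported on $\widetilde{\ell_j}$. An easier route to the same conclusion is to compare the classes of $T_j^n(\Oscr_X(H))$ in the Grothendieck group of $U_j$-local data. Alternatively, I can just compute the rank-zero part in degrees $\neq 0$ and observe that it is nonzero. Granting this, suppose $\Fscr\cong\Oscr_X(H)[k]$. On a neighbourhood disjoint from all four curves, comparing with $\Oscr_X(H)$ forces $k=0$. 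If $a\neq 0$, then restricting to $U_1$ shows that $\Fscr|_{U_1}\cong T_1^{a}(\Oscr_X(H))|_{U_1}$ has nonzero cohomology in a degree other than $0$. This cohomology is supported on $\widetilde{\ell_1}\subset U_1$ and hence survives restriction, which contradicts $\Fscr\cong\Oscr_X(H)$. The same argument on $U_3$ gives $b=0$.

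The main obstacle is making the locality statement rigorous: showing that restricting to $U_j$ commutes with the twists, so that $T_i$ for $i\neq j$ really acts trivially there. It also takes care to verify for negative powers that the extra cohomology in nonzero degree does not cancel. For the first point I will argue via kernels and the base change $(\cdot)|_{U_j}$ of Fourier–Mukai transforms, noting that the extra term $\Sscr_i^\vee\boxtimes\Sscr_i$ restricts to zero on $U_j\times X$. For the second point, rather than tracking every cohomology sheaf, I will use the fact that the support of $\Hscr^{\neq 0}$ is nonempty. This is detected by $\RHom(\Oscr_x,-)$ at a point $x\in\widetilde{\ell_j}$, where the local computation reduces to an explicit complex.
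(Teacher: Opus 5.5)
Your localisation steps are fine. Because the twists commute, you can apply $T_j^{n_j}$ first. Each $T_i$ with $i\neq j$ then leaves restrictions to $U_j$ unchanged, since its cone term $\RHom_X(\Sscr_i,-)\otimes\Sscr_i$ vanishes on $U_j$. Hence $\Fscr|_{U_j}\cong T_j^{n_j}(\Oscr_X(H))|_{U_j}$, and restricting to the complement of all the $\widetilde{\ell_i}$ does give $k=0$.

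The gap is in the invariant you use next. It is false that $T_j^{n}(\Oscr_X(H))$ has non-zero cohomology outside degree $0$ for every $n\neq 0$, and your description of the case $n<0$ is wrong. Since $\RHom_X(\Oscr_X(H),\Sscr_j)\cong\CC[-1]$, the triangle for the inverse twist reads $T_j^{-1}(\Oscr_X(H))\to\Oscr_X(H)\xrightarrow{f}\Sscr_j[1]$, with $f\neq 0$ in $\Ext^1_X(\Oscr_X(H),\Sscr_j)\cong\CC$. So $T_j^{-1}(\Oscr_X(H))\cong\operatorname{Cone}(f)[-1]$ is the \emph{sheaf} $\Gscr$ in the non-split extension $0\to\Sscr_j\to\Gscr\to\Oscr_X(H)\to 0$, concentrated in degree $0$. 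As a consistency check, $\RHom_X(\Sscr_j,\Gscr)\cong\RHom_X(\Sscr_j[-2],\Oscr_X(H))\cong\CC$ and $T_j(\Gscr)\cong\operatorname{Cone}(\Sscr_j\hookrightarrow\Gscr)\cong\Oscr_X(H)$.

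Consequently, for $a=-1$ your argument on $U_1$ gives no contradiction. There is no cohomology in non-zero degrees, no ``rank-zero part in degrees $\neq 0$'', and $\Hscr^{\neq 0}$ has empty support. The $K_0$ variant also fails, for every $n$, if it means $K_0(U_j)$. By Lemma \ref{lem:K0same} $[\Sscr_j]=[\Sscr_i]$ in $K_0(X)$, and $\Sscr_i$ for $i\neq j$ is supported off $U_j$, so the localisation sequence gives $[\Sscr_j|_{U_j}]=0$.

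The repair is easy. For $n>0$ your induction is correct: $T_j^n(\Oscr_X(H))$ has $\Hscr^0\cong\Oscr_X(H)$ and $\Hscr^{2m-1}\cong\Sscr_j$ for $1\le m\le n$, and nothing else. Since $n_2=-n_1$ and $n_4=-n_3$, you can always work on whichever of $U_1,U_2$ (resp.\ $U_3,U_4$) carries the positive exponent. Alternatively, for $n<0$ use that $\Hscr^0$ contains the non-zero torsion subsheaf $\Sscr_j$; for $n\le -2$ there is also $\Hscr^{-2}\cong\Sscr_j$.

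Simpler still, once you have $k=0$, compare $\RHom_X(\Sscr_1,\Fscr)\cong\RHom_X(\Tscr_{\underline{(a,b)}}^{-1}\Sscr_1,\Oscr_X(H))\cong\RHom_X(\Sscr_1[2a],\Oscr_X(H))\cong\CC[-2-2a]$ with $\RHom_X(\Sscr_1,\Oscr_X(H))\cong\CC[-2]$. That is essentially the paper's proof. The paper avoids restrictions entirely by pairing with both $\Sscr_1$ and $\Sscr_2$, which gives $k=-2a$ and $k=2a$ at once.
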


\begin{proof}
Since the $\Sscr_j$ are $3$-spherical,  one has $T_j(\Sscr_j)\cong\Sscr_j[-2]$.  Since the twists $T_j$ commute,  we get 
\[
\Tscr_{\underline{(a,b)}}^{-1}(\Sscr_1)\cong\Sscr_1[2a],
\quad
\Tscr_{\underline{(a,b)}}^{-1}(\Sscr_2)\cong\Sscr_2[-2a].
\]
As shown in the proof of Lemma \ref{lem:two_spherical},
\[
\RHom_X(\Sscr_j,\Oscr_X(H))\cong\CC[-2].
\]
Therefore
\[
\RHom_X(\Sscr_1,\Tscr_{\underline{(a,b)}}(\Oscr_X(H)))\cong  \RHom_X(\Tscr_{\underline{(a,b)}}^{-1}(\Sscr_1),\Oscr_X(H)) \cong \CC[-2-2a], 
\]
\[
\RHom_X(\Sscr_2,\Tscr_{\underline{(a,b)}}(\Oscr_X(H))) \cong \RHom_X(\Tscr_{\underline{(a,b)}}^{-1}(\Sscr_2), \Oscr_X(H)) \cong \CC[-2+2a].
\]
Assume $\Tscr_{\underline{(a,b)}}(\Oscr_X(H))\cong\Oscr_X(H)[k]$.  Then both complexes above are also isomorphic to $\CC[-2+k]$.  Thus $k=-2a=2a$, so $a=k=0$.  The same argument applied to $\Sscr_3$ and $\Sscr_4$ gives $b=0$.
\end{proof}

\begin{theorem}\label{thm:main}
Let $\aa=(a_1,\dots,a_6)$,  $\bb=(b_1,\dots,b_6) \in \ZZ^6$.  If the full exceptional collections $\EE_{\aa}$ and $\EE_{\bb}$ lie in the same $B_6\ltimes\ZZ^6$-orbit,  then
\[
a_1-a_2=b_1-b_2 \text{ and }
a_3-a_4=b_3-b_4.
\]
\end{theorem}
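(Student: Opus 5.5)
The plan is to reduce everything to Lemma \ref{lem:CHS} and Lemma \ref{lem:detect}, using a modified autoequivalence that fixes $\EE_{\aa}$ term-wise. Put $\Phi_{\aa} := \Tscr_{\aa}\,\Phi\,\Tscr_{\aa}^{-1}$. Since $\Phi$ fixes every object of $\EE$ (Lemma \ref{lem:phi_fixes}), $\Phi_{\aa}$ fixes every object of $\EE_{\aa}=\Tscr_{\aa}(\EE)$. If $\EE_{\aa}$ and $\EE_{\bb}$ are in the same $B_6\ltimes\ZZ^6$-orbit, Lemma \ref{lem:CHS} tells us that $\Phi_{\aa}$ fixes every term of $\EE_{\bb}$ up to shifts. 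In particular
\[
\Phi_{\aa}\,\Tscr_{\bb}(\Oscr_X(H)) \cong \Tscr_{\bb}(\Oscr_X(H))[k]
\]
for some $k\in\ZZ$, where $\Oscr_X(H)$ is the fourth term of $\EE$.

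Next I would compute $\Phi_{\aa}\Tscr_{\bb}$ using the conjugation relations \eqref{eq:conj_twists}. Because the $T_j$ commute, $\Phi\,\Tscr_{\cc}\,\Phi^{-1}\cong\Tscr_{\sigma(\cc)}$ for every $\cc\in\ZZ^6$, where $\sigma$ swaps coordinates $1\leftrightarrow2$ and $3\leftrightarrow4$. This gives
\[
\Phi_{\aa}\Tscr_{\bb}=\Tscr_{\aa}\Phi\Tscr_{\bb-\aa}\cong \Tscr_{\aa}\Tscr_{\sigma(\bb-\aa)}\Phi .
\]
Applying this to $\Oscr_X(H)$ and using $\Phi(\Oscr_X(H))\cong\Oscr_X(H)$, we get $\Tscr_{\aa+\sigma(\bb-\aa)}(\Oscr_X(H))\cong\Tscr_{\bb}(\Oscr_X(H))[k]$. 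Applying $\Tscr_{\bb}^{-1}$ and using commutativity once more yields
\[
\Tscr_{\cc}(\Oscr_X(H))\cong\Oscr_X(H)[k],\qquad \cc:=\aa-\bb+\sigma(\bb-\aa).
\]
Write $\dd=\bb-\aa$. Then $\cc=\sigma(\dd)-\dd=(d_2-d_1,\,d_1-d_2,\,d_4-d_3,\,d_3-d_4,\,0,\,0)$, which is exactly $\underline{(r,s)}$ with $r=d_2-d_1$ and $s=d_4-d_3$.

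Lemma \ref{lem:detect} now forces $r=s=0$. This means $b_2-a_2=b_1-a_1$ and $b_4-a_4=b_3-a_3$, which is the claim.

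The main point needing care is the bookkeeping of the conjugation. One has to check that $\Phi\Tscr_{\cc}\Phi^{-1}\cong\Tscr_{\sigma(\cc)}$ also holds for negative exponents, which follows by inverting \eqref{eq:conj_twists}, and that all the reorderings of commuting twists are legitimate. One also has to confirm that Lemma \ref{lem:CHS} applies to $\Phi_{\aa}$, which fixes the terms of $\EE_{\aa}$ honestly and not just up to shift; this is only a stronger hypothesis than the lemma needs, so it causes no trouble.
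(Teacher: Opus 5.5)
Your proposal is correct and follows essentially the paper's argument. You conjugate $\Phi$ by $\Tscr_{\aa}$, invoke Lemma~\ref{lem:CHS}, use \eqref{eq:conj_twists} and commutativity of the twists to reduce to $\Tscr_{\underline{(r,s)}}(\Oscr_X(H))\cong\Oscr_X(H)[k]$, and conclude with Lemma~\ref{lem:detect}. The only cosmetic difference is that the paper first forms the commutator $\Tscr_{\aa-\bb}\Phi\Tscr_{\aa-\bb}^{-1}\Phi^{-1}\cong\Tscr_{\underline{(d_1,d_2)}}$ as an autoequivalence fixing $\EE$ up to shifts, whereas you evaluate directly on $\Tscr_{\bb}(\Oscr_X(H))$ and move $\Phi$ to the right.
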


\begin{proof}
Since $\Phi$ fixes every object of $\EE$ by Lemma \ref{lem:phi_fixes}, the autoequivalence $\Tscr_{\aa}\Phi\Tscr_{\aa}^{-1}$ fixes every object of $\EE_{\aa}$.  Suppose that $\EE_{\aa}$ and $\EE_{\bb}$ are in the same $B_6\ltimes\ZZ^6$-orbit.  Then by \cite[Lemma~5.7]{ChangHaidenSchroll},  the same autoequivalence $\Tscr_{\aa}\Phi\Tscr_{\aa}^{-1}$ fixes every object of $\EE_{\bb}$ up to shifts.  Applying $\Tscr_{\bb}^{-1}$ to the resulting isomorphisms, we see that  $\Tscr_{\aa-\bb}\Phi\Tscr_{\aa-\bb}^{-1}$
fixes every object of $\EE$ up to a shift.  Since $\Phi^{-1}$ fixes $\EE$ term-wise, so does
\[
\Tscr_{\aa-\bb}\Phi\Tscr_{\aa-\bb}^{-1}\Phi^{-1}.
\]
Put $d_1 : =(a_1-a_2)-(b_1-b_2)$, $d_2 :=(a_3-a_4)-(b_3-b_4)$. 
Using \eqref{eq:conj_twists} and the commutativity of the twists $T_i$,  we get
\[
\Tscr_{\aa-\bb}\Phi\Tscr_{\aa-\bb}^{-1}\Phi^{-1}
\cong
\Tscr_{\underline{(d_1,d_2)}}.
\]
In particular,
\[
\Tscr_{\underline{(d_1,d_2)}}(\Oscr_X(H))\cong\Oscr_X(H)[k]
\]
for some $k\in\ZZ$.  Thus by Lemma \ref{lem:detect} we conclude $d_1=d_2=0$. 
\end{proof}

\begin{corollary}\label{cor:inf}
For $(r,s)\in\ZZ^2$ the full exceptional collections $T_1^rT_3^s(\EE)= \EE_{(r,0,s,0,0,0)}$ lie in pairwise distinct $B_6\ltimes\ZZ^6$-orbits. In particular,  the action of $B_6\ltimes\ZZ^6$ on the set of full exceptional collections in $\Dscr^b(X)$ has infinitely many orbits.
\end{corollary}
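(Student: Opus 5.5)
The corollary should follow directly from Theorem \ref{thm:main}, applied to the tuples $\aa=(r,0,s,0,0,0)$ and $\bb=(r',0,s',0,0,0)$. Note first that $\Tscr_{(r,0,s,0,0,0)}=T_1^rT_3^s$ by definition, since $T_2^0$, $T_4^0$, $T_5^0$ and $T_6^0$ are identity functors. This gives the equality $T_1^rT_3^s(\EE)=\EE_{(r,0,s,0,0,0)}$ asserted in the statement.

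Next, suppose that $\EE_{(r,0,s,0,0,0)}$ and $\EE_{(r',0,s',0,0,0)}$ lie in the same $B_6\ltimes\ZZ^6$-orbit. Theorem \ref{thm:main} then yields
\[
a_1-a_2=b_1-b_2 \quad\text{and}\quad a_3-a_4=b_3-b_4,
\]
which here read $r-0=r'-0$ and $s-0=s'-0$. Hence $(r,s)=(r',s')$. Taking the contrapositive, distinct pairs $(r,s)\neq(r',s')$ give collections in distinct orbits.

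Finally, $\ZZ^2$ is infinite, so the family $\{T_1^rT_3^s(\EE)\}_{(r,s)\in\ZZ^2}$ meets infinitely many pairwise distinct orbits. This proves the last assertion.

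There is no real obstacle here. The only points to check are that each $T_1^rT_3^s(\EE)$ is indeed a full exceptional collection, which holds because autoequivalences preserve fullness and exceptionality, and that the differences $a_1-a_2$ and $a_3-a_4$ separate the chosen tuples. Both are immediate.
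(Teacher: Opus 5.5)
Your proposal is correct and is exactly the argument the paper intends: the corollary is stated without separate proof as an immediate consequence of Theorem \ref{thm:main}. Applied to $\aa=(r,0,s,0,0,0)$ and $\bb=(r',0,s',0,0,0)$, the invariants $a_1-a_2=r$ and $a_3-a_4=s$ separate distinct pairs, which gives pairwise distinct orbits and hence infinitely many.
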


The following corollary shows that there is,  moreover,  an infinite family of orbits which are indistinguishable on the level of $K_0$.  Denote $\EE_{\underline{(r,s)}} =\Tscr_{\underline{(r,s)}}(\EE)$. 

\begin{corollary}\label{cor:K0_invisible}
For $(r,s)\in\ZZ^2$ the full exceptional collections $\EE_{\underline{(r,s)}}$ lie in pairwise distinct $B_6\ltimes\ZZ^6$-orbits.  Moreover,  every autoequivalence $\Tscr_{\underline{(r,s)}}$ acts trivially on $K_0(X)$.  In particular, $\EE_{\underline{(r,s)}}$ and $\EE$ coincide in $K_0(X)$ term-wise.
\end{corollary}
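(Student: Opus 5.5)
The corollary has two parts: the orbit statement and the statement about $K_0(X)$. I would treat them separately.

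For the first part, I would apply Theorem \ref{thm:main} with $\aa = \underline{(r,s)} = (r,-r,s,-s,0,0)$ and $\bb = \underline{(r',s')}$. Suppose $\EE_{\underline{(r,s)}}$ and $\EE_{\underline{(r',s')}}$ lie in the same $B_6\ltimes\ZZ^6$-orbit. Then the theorem gives $a_1-a_2=b_1-b_2$ and $a_3-a_4=b_3-b_4$. Here these read $2r = 2r'$ and $2s=2s'$, so $(r,s)=(r',s')$. This gives pairwise distinctness, and nothing beyond Theorem \ref{thm:main} is needed.

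For the second part, I would use Lemma \ref{lem:K0same}: the twists $T_1,\dots,T_6$ all induce the same automorphism $t$ of $K_0(X)$. Spherical twists are autoequivalences, so $t$ is invertible and $T_j^{-1}$ induces $t^{-1}$. The automorphism induced by $\Tscr_{\underline{(r,s)}} = T_1^rT_2^{-r}T_3^sT_4^{-s}$ is therefore
\[
t^r t^{-r} t^s t^{-s} = \mathrm{id}.
\]
This step only uses that passing to $K_0$ is multiplicative under composition of exact functors, so no commutativity in $K_0$ is needed. Consequently $[\Tscr_{\underline{(r,s)}}(\Escr_i)] = [\Escr_i]$ for every object $\Escr_i$ of $\EE$, which is the term-wise coincidence in $K_0(X)$.

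There is no real obstacle. The only point to check is that the inverse twist acts on $K_0$ by the inverse of $t$, and this is automatic because $T_j^{-1}$ is an exact inverse equivalence.
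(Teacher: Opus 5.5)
Your proposal is correct and follows the paper's own argument. Pairwise distinctness is read off from Theorem~\ref{thm:main} with $a_1-a_2=2r$ and $a_3-a_4=2s$. Triviality on $K_0(X)$ comes from Lemma~\ref{lem:K0same}: all $T_j$ induce the same automorphism $t$, so $\Tscr_{\underline{(r,s)}}$ induces $t^rt^{-r}t^st^{-s}=\mathrm{id}$. You simply spell out the small steps, the factor $2$ and the invertibility of $t$, that the paper leaves implicit.
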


\begin{proof}
The fact that the collections $\EE_{\underline{(r,s)}}$ are in pairwise distinct orbits is immediate from Theorem \ref{thm:main}. Looking at the formula defining $\Tscr_{\underline{(r,s)}}$ and applying Lemma \ref{lem:K0same} we conclude that it acts trivially on $K_0(X)$.
 \end{proof}

\begin{remark} At the moment we do not attempt to classify all $B_6\ltimes\ZZ^6$-orbits.  We note,  however,  that the collections $\EE_{\aa}$ with $\aa \in \ZZ^6$ considered above do not represent \emph{all} orbits.  More orbits can be produced by considering other line bundles on the curves $\widetilde{\ell_i}$ (which are spherical by Corollary \ref{cor:spherical}).  

For instance,  for $1 \leq i \leq 4$ the full exceptional collection $T_{\Oscr_{\widetilde{\ell_i}}}(\EE)$ does not lie in the same $B_6\ltimes\ZZ^6$-orbit as $\EE_{\aa}$ for any $\aa \in\ZZ^6$.  The argument is similar to the proof of Theorem \ref{thm:main},  so we will only sketch it.  Let $i=1$ (the other cases are identical).  Denote $U_j := T_{\Oscr_{\widetilde{\ell_j}}}$.  Suppose for a contradiction that $U_1(\EE)$ and $\EE_{\aa}$ are in the same $B_6\ltimes\ZZ^6$-orbit.  Since $U_1\Phi U_1^{-1}$ fixes every object of $U_1(\EE)$, Lemma~\ref{lem:CHS} implies that $U_1\Phi U_1^{-1}$ fixes $\EE_{\aa}$ term-wise up to shifts.  Conjugating by $\Tscr_{\aa}^{-1}$ and composing with $\Phi^{-1}$,
we conclude that $Q := \Tscr_{\aa}^{-1}U_1\Phi U_1^{-1}\Tscr_{\aa}\Phi^{-1}$ fixes $\EE$ term-wise up to shifts.  Using $\Phi U_1\Phi^{-1}\cong U_2$ we can rewrite $Q$ as $Q = \Tscr_{\aa}^{-1}U_1U_2^{-1}(\Phi\Tscr_{\aa}\Phi^{-1})$.  We then apply $Q$ to $\Oscr_X$,  which is the third object of $\EE$.  It is easy to see that the twists $T_i$ fix $\Oscr_X$,  so $Q(\Oscr_X) \cong \Oscr_X[k]$ implies $\Tscr_{\aa}^{-1}U_1U_2^{-1}(\Oscr_X) \cong  \Oscr_X[k]$.  Applying $\RHom_X(\Sscr_1,-)$ to both sides yields a contradiction since $\RHom_X(\Sscr_1,\Oscr_X[k]) = 0$ for every $k \in \ZZ$, while 
\[
\begin{aligned}
\RHom_X(\Sscr_1,\Tscr_{\aa}^{-1}U_1U_2^{-1}(\Oscr_X))
&\cong
\RHom_X(\Tscr_{\aa}(\Sscr_1),U_1U_2^{-1}(\Oscr_X))\\
&\cong
\RHom_X(\Sscr_1,U_1(\Oscr_X))[2a_1],
\end{aligned}
\]
which is easily seen to be non-zero. 
\end{remark}

Now put
\begin{equation}\label{eq:Phirs}
\Phi_{r,s}:=\Tscr_{\underline{(r,s)}}\Phi\Tscr_{\underline{(r,s)}}^{-1}.
\end{equation}

We record the following,  which will be used in the next section.

\begin{lemma}\label{lem:cross_stab}
Let $(r,s),(r',s')\in\ZZ^2$. Then $\Phi_{r,s}$ fixes $\EE_{\underline{(r',s')}}$ term-wise up to shifts if and only if $(r,s)=(r',s')$.
\end{lemma}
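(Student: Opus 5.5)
The "if" direction is immediate. By Lemma \ref{lem:phi_fixes}, $\Phi$ fixes every object of $\EE$. Hence $\Phi_{r,s}=\Tscr_{\underline{(r,s)}}\Phi\Tscr_{\underline{(r,s)}}^{-1}$ sends $\Tscr_{\underline{(r,s)}}(\Escr_i)$ to
\[
\Tscr_{\underline{(r,s)}}\Phi(\Escr_i)\cong\Tscr_{\underline{(r,s)}}(\Escr_i).
\]
So $\Phi_{r,s}$ fixes $\EE_{\underline{(r,s)}}$ term-wise, with no shifts needed.

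For the "only if" direction I would reduce to Lemma \ref{lem:detect}, following the proof of Theorem \ref{thm:main}. Suppose $\Phi_{r,s}$ fixes every object of $\EE_{\underline{(r',s')}}=\Tscr_{\underline{(r',s')}}(\EE)$ up to shifts. Conjugating by $\Tscr_{\underline{(r',s')}}^{-1}$, the twists commute, and $\Tscr_{\underline{(r,s)}}\Tscr_{\underline{(r',s')}}^{-1}=\Tscr_{\underline{(r-r',s-s')}}$. Hence
\[
\Tscr_{\underline{(r-r',s-s')}}\,\Phi\,\Tscr_{\underline{(r-r',s-s')}}^{-1}
\]
fixes every object of $\EE$ up to shifts. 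Since $\Phi^{-1}$ fixes $\EE$ term-wise, composing on the right with $\Phi^{-1}$ shows that $\Tscr_{\underline{(d_1,d_2)}}\Phi\Tscr_{\underline{(d_1,d_2)}}^{-1}\Phi^{-1}$ also fixes $\EE$ term-wise up to shifts, where $d_1=r-r'$ and $d_2=s-s'$.

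Next I would compute this commutator using \eqref{eq:conj_twists}. Conjugation by $\Phi$ swaps $T_1\leftrightarrow T_2$ and $T_3\leftrightarrow T_4$. It therefore sends $\Tscr_{\underline{(d_1,d_2)}}^{-1}=T_1^{-d_1}T_2^{d_1}T_3^{-d_2}T_4^{d_2}$ to $T_2^{-d_1}T_1^{d_1}T_4^{-d_2}T_3^{d_2}=\Tscr_{\underline{(d_1,d_2)}}$. Hence
\[
\Tscr_{\underline{(d_1,d_2)}}\Phi\Tscr_{\underline{(d_1,d_2)}}^{-1}\Phi^{-1}\cong\Tscr_{\underline{(d_1,d_2)}}^2=\Tscr_{\underline{(2d_1,2d_2)}}.
\]
The proof of Theorem \ref{thm:main} records this commutator as $\Tscr_{\underline{(d_1,d_2)}}$ rather than its square. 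The discrepancy does not matter here: either way we obtain $\Tscr_{\underline{(2d_1,2d_2)}}(\Oscr_X(H))\cong\Oscr_X(H)[k]$ (or the same without the factor $2$) for some $k$, using the fourth object of $\EE$. Lemma \ref{lem:detect} then gives $2d_1=2d_2=0$, so $(r,s)=(r',s')$.

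The only step needing care is the conjugation identity. One must check that conjugation by $\Phi$ is applied to each factor consistently with the commutation of the $T_i$, and keep track of the sign, or doubling, of the exponents. The conclusion is insensitive to this, since Lemma \ref{lem:detect} rules out every nonzero pair in any case.
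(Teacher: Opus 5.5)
Your proof is correct and is essentially the paper's argument: conjugate by $\Tscr_{\underline{(r',s')}}^{-1}$, compose with $\Phi^{-1}$, identify the commutator as $\Tscr_{\underline{(2(r-r'),2(s-s'))}}$ via \eqref{eq:conj_twists}, and apply Lemma~\ref{lem:detect} to $\Oscr_X(H)$. The ``discrepancy'' you flag with the proof of Theorem~\ref{thm:main} is not real: there $d_1=(a_1-a_2)-(b_1-b_2)$, and for $\aa-\bb=\underline{(r-r',s-s')}$ this equals $2(r-r')$ (similarly $d_2=2(s-s')$), so the two computations agree.
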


\begin{proof}
The ``if'' direction is immediate.  Now suppose that $\Phi_{r,s}$ fixes every term of $\EE_{\underline{(r',s')}}$ up to shifts.  Conjugating by $\Tscr_{\underline{(r',s')}}^{-1}$ shows that $
\Tscr_{\underline{(r-r',s-s')}}\Phi\Tscr_{\underline{(r-r',s-s')}}^{-1}$ fixes $\EE$ term-wise up to shifts.  Since $\Phi^{-1}$ fixes $\EE$ term-wise,  so does
\[
Q := \Tscr_{\underline{(r-r',s-s')}}\Phi\Tscr_{\underline{(r-r',s-s')}}^{-1}\Phi^{-1}.
\]
By \eqref{eq:conj_twists}
\[
\Phi\Tscr_{\underline{(r-r',s-s')}}^{-1}\Phi^{-1}
\cong
\Tscr_{\underline{(r-r',s-s')}}
\]
and hence $Q \cong \Tscr_{\underline{(2(r-r'),2(s-s'))}}$.
Thus
\[
\Tscr_{\underline{(2(r-r'),2(s-s'))}}(\Oscr_X(H))\cong\Oscr_X(H)[k]
\]
for some $k\in\ZZ$.  Lemma \ref{lem:detect} yields $(r,s)=(r',s')$.
\end{proof}

\section{Infinitely many connected components of $\Stab(X)$}\label{sec:stab}
By $\Stab(X)$ we denote the space of locally finite (numerical) Bridgeland stability conditions on $\Dscr^b(X)$.  It is a finite-dimensional complex manifold equipped with a natural left action of $\Aut \Dscr^b(X)$ and a commuting right action of $\widetilde{\GL}^+(2,\RR)$.  We refer to \cite{BridgelandStab} for details.  Write 
\[
\Zscr : \Stab(X)\to\Hom_{\ZZ}(K_0(X),\CC)
\]
for the central charge map.

Let $\FF=(\Fscr_1,\dots,\Fscr_6)$ be a full exceptional collection in $\Dscr^b(X)$.  Following Macr\`i's \cite[\S 3.3]{MacriStab} we define an open subset $\Theta_{\FF} \subset \Stab(X)$ in the following way.  Let $p_1,\dots,p_6 \in \ZZ$ be such that $\FF_{\ppp} := (\Fscr_1[p_1],\dots,\Fscr_6[p_6])$ satisfies $\Ext^{\leq 0}(\Fscr_i[p_i],\Fscr_j[p_j]) = 0$ for all $i\neq j$. 
Let 
\[
\Ascr_{\FF,\ppp}
:=
\langle \Fscr_1[p_1],\dots,\Fscr_6[p_6]\rangle_{ext}
\]
be the extension-closed subcategory generated by $\Fscr_1[p_1],\dots,\Fscr_6[p_6]$.  By \cite[Lemma 3.14]{MacriStab} $\Ascr_{\FF,\ppp}$ is the heart of a bounded $t$-structure with simple objects  $\Fscr_1[p_1],\dots,\Fscr_6[p_6]$.  For arbitrary $z_1,\dots,z_6\in\HH =\{r\exp(i\pi\phi):r>0,\ 0<\phi\leq 1\}$,
setting 
\[
Z(\Fscr_i[p_i])=z_i, \quad i=1,\dots,6
\]
defines a unique locally finite stability condition with heart $\Ascr_{\FF,\ppp}$ (see \cite[Remark 2.2,  \S 3.3]{MacriStab}).  Let $\Theta_{\FF}$ be the set of stability conditions obtained in this way by varying $\ppp \in \ZZ^6$ and $z_i \in \HH$, up to the action of $\widetilde{\GL}^+(2,\RR)$.  By \cite[Lemma 3.19]{MacriStab} $\Theta_{\FF}$ is an open,  connected and simply connected submanifold of $\Stab(X)$.

\begin{lemma}\label{lem:stab_components}
Let $\FF$ and $\GG$ be full exceptional collections in $\Dscr^b(X)$, and let $\Phi$ be an autoequivalence which fixes every term of $\FF$.  If $\Phi$ does not fix every term of $\GG$ up to shifts, then $\Theta_{\FF}$ and $\Theta_{\GG}$ lie in different connected components of $\Stab(X)$.
\end{lemma}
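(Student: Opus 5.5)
The plan is to argue by contradiction. Suppose $\Theta_{\FF}$ and $\Theta_{\GG}$ lie in the same connected component $\Stab^\circ$ of $\Stab(X)$. The key tool will be Macr\`i's description of stability conditions in which all members of a full exceptional collection are stable. By \cite[\S 3.3]{MacriStab} (and the underlying results of \cite{BridgelandStab}), a stability condition $\sigma$ lies in $\Theta_{\FF}$ if and only if, up to the $\widetilde{\GL}^+(2,\RR)$-action, every $\Fscr_i$ is $\sigma$-stable. Moreover, the open sets $\Theta_{\FF'}$, with $\FF'$ running over the $B_6\ltimes\ZZ^6$-orbit of $\FF$, cover a union of connected components. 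Concretely, Macr\`i shows that the boundary of $\Theta_{\FF}$ (walls where some $\Fscr_i[p_i]$ destabilizes) is crossed into $\Theta_{\FF'}$ with $\FF'$ a mutation of $\FF$. So I would first establish or cite the following: the union $U_{\FF}:=\bigcup_{\FF'\in B_6\ltimes\ZZ^6\cdot\FF}\Theta_{\FF'}$ is open and closed in $\Stab(X)$. If this holds, it contains the component of $\Theta_{\FF}$, hence $\Theta_{\GG}\cap\Theta_{\FF'}\neq\emptyset$ for some $\FF'$ in the orbit of $\FF$.

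Second, I will show that a non-empty intersection $\Theta_{\GG}\cap\Theta_{\FF'}$ forces $\GG$ to lie in the same orbit as $\FF'$. The cleaner route avoids orbits altogether and instead uses $\Phi$-invariance directly. By Lemma \ref{lem:CHS}, $\Phi$ fixes every term of each $\FF'$ in the orbit up to shift, so $\Phi(\Theta_{\FF'})=\Theta_{\FF'}$: the heart $\Ascr_{\FF',\ppp}$ is preserved, and $\Phi$ acts trivially on the classes of its simples. Hence $\Phi$ preserves $U_{\FF}$, and in fact fixes each point of $U_{\FF}$ up to the $\ZZ$-shift ambiguity. Pick $\sigma\in\Theta_{\GG}\cap U_{\FF}$. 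Then $\sigma$ lies in some $\Theta_{\FF'}$, where the simples $\Fscr'_j[p_j]$ of the heart are fixed by $\Phi$, so $\Phi\cdot\sigma=\sigma$, because the heart and the central charge on its simples are unchanged. On the other hand $\sigma\in\Theta_{\GG}$ means each $\Gscr_i$ is $\sigma$-stable of some phase. Then $\Phi(\Gscr_i)$ is $\Phi\cdot\sigma=\sigma$-stable of the same phase and has the same class in $K_0$, since $\Phi$ fixes $\FF$ and the classes of $\FF$ form a basis.

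Third, I need to derive $\Phi(\Gscr_i)\cong\Gscr_i$ up to shift, which contradicts the hypothesis. For this I would use the fact, from Macr\`i's construction, that in a stability condition of the form $\Theta_{\GG}$ (after rotating) the objects $\Gscr_i[q_i]$ are the simple objects of the heart $\Ascr_{\GG,\qqq}$. A stable object of class $[\Gscr_i]$ and phase $\phi(\Gscr_i)$ lies in the same slice $\Pscr(\phi)$ as $\Gscr_i$. The heart is of finite length with simples $\Gscr_k[q_k]$, and the classes $[\Gscr_k]$ form a basis of $K_0$. So any object of $\Ascr_{\GG,\qqq}$ with class $[\Gscr_i[q_i]]$ has a Jordan–H\"older filtration whose only factor is $\Gscr_i[q_i]$, and is therefore isomorphic to it. Applying this to $\Phi(\Gscr_i)[q_i]$ gives the contradiction.

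The main obstacle is the first step: showing that $\bigcup\Theta_{\FF'}$ over the orbit is closed, i.e.\ that a limit of stability conditions in which an exceptional collection is stable admits another exceptional collection from the same orbit that is stable at the limit. If citing Macr\`i is insufficient, I would instead argue directly that the set $\{\sigma:\Phi\sigma=\sigma\}$ is closed, and that $\Phi$ fixes an open neighbourhood of any fixed point. The latter would follow because $\Phi$ acts trivially on $K_0$, and $\Zscr$ is a local homeomorphism. Together these make the fixed locus a union of components containing $\Theta_{\FF}$. The third step then shows that $\Theta_{\GG}$ contains a non-fixed point, namely any of its points, which finishes the argument without needing to control orbits at all.
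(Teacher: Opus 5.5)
Your final paragraph together with your third step is the paper's proof, and that route is complete. The paper argues as follows. First, $\Phi$ fixes $\Theta_{\FF}$ pointwise, since it preserves each heart $\Ascr_{\FF,\ppp}$, its simples and the central charge. Second, the fixed locus of $\Phi$ is closed, and it is open because $\Phi$ acts trivially on $\K_0(X)$ and $\Zscr$ is locally injective; this is what the paper calls the identity theorem, and the argument is written out in Lemma \ref{lem:strategy_quintic}(1). Hence $\Phi$ fixes the whole component of $\Theta_{\FF}$. Finally, a fixed $\sigma\in\Theta_{\GG}$ with heart $\Ascr_{\GG,\qqq}$ forces $\Phi(\Gscr_i[q_i])\cong\Gscr_i[q_i]$. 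Your Jordan--H\"older count (an object of the finite-length heart whose class equals that of a simple must be that simple) is a fine substitute for the paper's step, which says that an autoequivalence of the heart permutes the simples and the classes pin down the permutation.

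You should discard your first step, which you present as the main line. Macr\`i does not show that $\bigcup\Theta_{\FF'}$ over a $B_6\ltimes\ZZ^6$-orbit is closed in $\Stab(X)$. He proves only that each $\Theta_{\FF}$ is open, connected and simply connected; closedness of such unions is known only in special cases such as $\PP^1$, not in general. Your second step also asserts $\Phi\sigma=\sigma$ on every $\Theta_{\FF'}$. That needs $\Phi(\Fscr'_j)\cong\Fscr'_j$ exactly, whereas Lemma \ref{lem:CHS} only gives it up to shifts. It does hold here, because mutations of exactly fixed objects are exactly fixed, but that has to be said. Neither point is needed, since the fixed-locus argument never refers to the braid orbit.
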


\begin{proof}
Since $\FF$ is full,  the classes of its objects form a basis of $K_0(X)$.  Thus $\Phi$ acts trivially on $K_0(X)$.  Moreover, $\Phi$ fixes $\Theta_{\FF}$ pointwise.  Indeed, it preserves each heart $\Ascr_{\FF,\ppp}$,  fixes its simple objects and does not change the central charge,  since the actions $\Aut \Dscr^b(X)$ and $\widetilde{\GL}^+(2,\RR)$ commute.  Since $\Theta_{\FF}$ is open, the identity theorem implies that $\Phi$ acts trivially on the connected component of $\Stab(X)$ containing $\Theta_{\FF}$.

Assume that $\Theta_{\GG}$ lies in the same component.  Choose  $q_1,\dots,q_6 \in \ZZ$ such that the shifted collection $\GG_{\qqq} = (\Gscr_1[q_1],\dots,\Gscr_6[q_6])$ has no non-positive $\Ext$'s.  Let
$\sigma\in\Theta_{\GG}$ be a stability condition whose heart is $\Ascr_{\GG,\qqq}$.  Since $\Phi$ acts trivially on the whole connected component containing $\sigma$,  it in particular fixes $\sigma$,  hence induces an autoequivalence of the finite-length abelian category $\Ascr_{\GG,\qqq}$.  It therefore maps each simple object to a simple object.  Moreover,  since $\Phi$ acts trivially on $\K_0(X)\cong \K_0(\Ascr_{\GG,\qqq})$, we have 
\[
[\Phi(\Gscr_i[q_i])]=[\Gscr_i[q_i]]
\]
in $K_0(\Ascr_{\GG,\qqq})$.  But the classes of the simple objects form a basis of $K_0(\Ascr_{\GG,\qqq})$, hence $
\Phi(\Gscr_i[q_i])\cong \Gscr_i[q_i]$ for every $i$,  contrary to the assumption. 
\end{proof}

\begin{theorem}\label{thm:stab}
The map
\[
\ZZ^2\longrightarrow\pi_0(\Stab(X)),
\quad
(r,s)\longmapsto
\text{the connected component of }\Theta_{\EE_{\underline{(r,s)}}},
\]
is injective.  In particular,  $\Stab(X)$ has infinitely many connected components.  Moreover, there exists
\[
Z\in\Hom_{\ZZ}(K_0(X),\CC)
\]
such that the fibre $\Zscr^{-1}(Z)$ meets infinitely many connected components of $\Stab(X)$. 
\end{theorem}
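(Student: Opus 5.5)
Injectivity follows by combining Lemma \ref{lem:stab_components} with Lemma \ref{lem:cross_stab}. Take $(r,s)\neq(r',s')$ and put $\FF:=\EE_{\underline{(r,s)}}$, $\GG:=\EE_{\underline{(r',s')}}$, with $\Phi_{r,s}$ as in \eqref{eq:Phirs}.

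\emph{Step 1: $\Phi_{r,s}$ fixes $\FF$ term-wise.} By Lemma \ref{lem:phi_fixes}, $\Phi$ fixes every object of $\EE$ up to isomorphism, without shifts. Hence $\Phi_{r,s}=\Tscr_{\underline{(r,s)}}\Phi\Tscr_{\underline{(r,s)}}^{-1}$ fixes every term of $\FF=\Tscr_{\underline{(r,s)}}(\EE)$ up to isomorphism. This is the exact (unshifted) hypothesis that Lemma \ref{lem:stab_components} requires.

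\emph{Step 2: $\Phi_{r,s}$ does not fix $\GG$ up to shifts.} Since $(r,s)\neq(r',s')$, Lemma \ref{lem:cross_stab} says that $\Phi_{r,s}$ does not fix all terms of $\GG$ up to shifts. Lemma \ref{lem:stab_components} then puts $\Theta_{\FF}$ and $\Theta_{\GG}$ in different connected components. So the map is injective, and $\Stab(X)$ has infinitely many components.

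\emph{Step 3: a common central charge.} By Corollary \ref{cor:K0_invisible}, every $\Tscr_{\underline{(r,s)}}$ acts trivially on $K_0(X)$. Hence $[\Tscr_{\underline{(r,s)}}(\Escr_i)]=[\Escr_i]$ for every $i$ and every $(r,s)$.

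Fix one shift vector $\ppp$ making $\EE_\ppp$ Ext-nonpositive-free. Applying the autoequivalence $\Tscr_{\underline{(r,s)}}$ preserves all Ext groups, so the same $\ppp$ works for every $\EE_{\underline{(r,s)}}$.

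Choose $z_1,\dots,z_6\in\HH$ and let $Z$ be the homomorphism with $Z([\Escr_i[p_i]])=z_i$; this is well defined because the classes $[\Escr_i]$ form a basis of $K_0(X)$. For each $(r,s)$, Macr\`i's construction with heart $\Ascr_{\EE_{\underline{(r,s)}},\ppp}$ and values $z_i$ on the simple objects gives a stability condition in $\Theta_{\EE_{\underline{(r,s)}}}$. Its central charge equals $Z$, because the simple objects have the same classes $[\Escr_i[p_i]]$.

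Therefore $\Zscr^{-1}(Z)$ meets every component $\Theta_{\EE_{\underline{(r,s)}}}$, and these are pairwise distinct by Step 2.

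There is one point to watch. $\Theta_{\FF}$ is defined only up to the $\widetilde{\GL}^+(2,\RR)$-action, but the specific stability condition chosen in Step 3 lies in $\Theta_{\FF}$ itself, so the central charge is exactly $Z$ and is not just defined up to that action. Beyond this the argument is a direct combination of the earlier lemmas, and I expect no serious obstacle.
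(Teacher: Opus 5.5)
Your proposal is correct and follows the paper's proof: injectivity comes from combining Lemma \ref{lem:cross_stab} with Lemma \ref{lem:stab_components} applied to $\Phi_{r,s}$, exactly as you do. Your Step 3 unpacks the paper's argument, which takes any $\sigma=(Z,\Pscr)\in\Theta_{\EE}$ and notes that $\Tscr_{\underline{(r,s)}}(\sigma)\in\Theta_{\EE_{\underline{(r,s)}}}$ has central charge $Z\circ\Tscr_{\underline{(r,s)}}^{-1}=Z$; the stability condition you build by Macr\`i's construction with the same $\ppp$ and the same $z_i$ is precisely this transported one.
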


\begin{proof}
The autoequivalence $\Phi_{r,s}$ (see \eqref{eq:Phirs}) fixes every term of $\EE_{\underline{(r,s)}}$.  By Lemma \ref{lem:cross_stab} for any $(r',s') \neq (r,s)$ the autoequivalence $\Phi_{r,s}$ does not fix $\EE_{\underline{(r',s')}}$ term-wise up to shifts.  Lemma \ref{lem:stab_components} thus implies that $\Theta_{\EE_{\underline{(r,s)}}}$ and $\Theta_{\EE_{\underline{(r',s')}}}$ lie in distinct connected components.

Now choose $\sigma=(Z,\Pscr)\in\Theta_{\EE}$.  By Corollary \ref{cor:K0_invisible} the autoequivalence $\Tscr_{\underline{(r,s)}}$ acts trivially on $\K_0(X)$.  Therefore the central charge of $\Tscr_{\underline{(r,s)}}(\sigma)$ is $Z\circ\Tscr_{\underline{(r,s)}}^{-1}=Z$. 
By the definition of $\Theta_{\EE_{\underline{(r,s)}}}$, one has $\Tscr_{\underline{(r,s)}}(\sigma)\in\Theta_{\EE_{\underline{(r,s)}}}$. 
Since these stability conditions lie in pairwise distinct connected components,  it follows that $\Zscr^{-1}(Z)$ meets infinitely many connected components.
\end{proof} 

\section{Disconnectedness of $\Stab(Y)$ for a quintic threefold $Y$}

In this section we will prove that the space of Bridgeland stability conditions $\Stab(Y)$ on a symmetric quintic threefold $Y$ is disconnected.  Even though such a $Y$ is Calabi-Yau,  one can adapt the method used in \S \ref{sec:stab} in the presence of full exceptional collections. 

The construction is based on the following general observation. 

\begin{lemma}\label{lem:strategy_quintic} Let $Y$ be a smooth projective threefold and $g \in \Aut(Y)$ an automorphism acting as the identity on $K_{num}(Y)$. Moreover,  assume that there exist (i) two orthogonal spherical objects $\Sscr_1, \Sscr_2 \in \Dscr^b(Y)$ such that $g^*(\Sscr_1) \cong \Sscr_2$ and (ii) a stability condition $\sigma \in \Stab(Y)$ such that $g^*(\sigma) = \sigma$.
Then \begin{enumerate}
\item $g^*$ fixes the entire connected component $\Sigma \subset \Stab(Y)$ of $\sigma$ pointwise, 
\item $\sigma$ and $T_{\Sscr_1}(\sigma)$ lie in distinct connected components of $\Stab(Y)$. 
\end{enumerate} \end{lemma}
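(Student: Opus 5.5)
For (1) I would reuse the argument from the proof of Lemma \ref{lem:stab_components}. Since $g$ acts as the identity on $K_{num}(Y)$, the autoequivalence $g^*$ commutes with the central charge map: for every $\tau=(Z,\Pscr)$ we have $\Zscr(g^*\tau)=Z\circ (g^*)^{-1}_{K}=Z$. So $g^*$ restricts to a self-map of $\Sigma$ (it is a homeomorphism of $\Stab(Y)$ and fixes $\sigma\in\Sigma$), and this self-map lies over the identity of $\Hom(K_{num}(Y),\CC)$. By Bridgeland's theorem, $\Zscr|_\Sigma$ is a local homeomorphism. Hence the fixed-point set $\{\tau\in\Sigma: g^*\tau=\tau\}$ is closed, because $\Stab(Y)$ is Hausdorff. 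It is also open: near a fixed point $\tau$ choose a neighbourhood $U$ on which $\Zscr$ is injective, and a smaller neighbourhood $V\subset U$ with $g^*V\subset U$ (by continuity); then for $\tau'\in V$ both $\tau'$ and $g^*\tau'$ lie in $U$ with the same central charge, so they are equal. The set is non-empty since it contains $\sigma$, and $\Sigma$ is connected, so it equals $\Sigma$.

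For (2) I would argue by contradiction, mirroring Lemma \ref{lem:strategy}. Suppose $T_{\Sscr_1}(\sigma)\in\Sigma$. By (1), $g^*T_{\Sscr_1}(\sigma)=T_{\Sscr_1}(\sigma)$. Using $g^*\circ T_{\Sscr_1}\cong T_{\Sscr_2}\circ g^*$ (\cite[Lemma 8.21]{HuybrechtsFM}) together with $g^*\sigma=\sigma$, this gives $T_{\Sscr_2}(\sigma)=T_{\Sscr_1}(\sigma)$. So $Q:=T_{\Sscr_1}^{-1}T_{\Sscr_2}$ fixes $\sigma$. Since $\Sscr_1,\Sscr_2$ are orthogonal, the twists commute and $T_{\Sscr_1}(\Sscr_2)\cong\Sscr_2$. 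Hence $Q(\Sscr_1)\cong\Sscr_1[2]$ and $Q(\Sscr_2)\cong \Sscr_2[-2]$ (3-spherical, so $T_{\Sscr}(\Sscr)\cong\Sscr[-2]$).

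The remaining step is to derive a contradiction from $Q\sigma=\sigma$; this is the main obstacle. An autoequivalence fixing $\sigma$ preserves the slicing $\Pscr$, so it sends semistable objects of phase $\phi$ to semistable objects of phase $\phi$, and it preserves the maximal and minimal phases $\phi^\pm_\sigma(E)$ of every object. Taking $E=\Sscr_1$, we would get $\phi^+_\sigma(\Sscr_1)=\phi^+_\sigma(Q\Sscr_1)=\phi^+_\sigma(\Sscr_1[2])=\phi^+_\sigma(\Sscr_1)+2$. This is impossible because $\Sscr_1\neq 0$ has a finite HN filtration. This argument needs neither semistability of $\Sscr_1$ nor the $K$-theory hypothesis. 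I would still double-check that the action convention ($\Phi\cdot(Z,\Pscr)=(Z\circ\Phi_*^{-1},\Phi(\Pscr))$) gives exactly that $Q$ preserves each slice $\Pscr(\phi)$, so that the phase comparison is legitimate. Orthogonality is used only to get $Q(\Sscr_1)\cong \Sscr_1[2]$, through commutativity and $T_{\Sscr_2}(\Sscr_1)\cong\Sscr_1$.
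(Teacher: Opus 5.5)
Your proposal is correct and follows the paper's proof: part (1) is the same open--closed argument, using injectivity of $\Zscr$ near a fixed point together with continuity of $g^*$. In part (2) the paper likewise derives $T_{\Sscr_1}(\sigma)=T_{\Sscr_2}(\sigma)$ and compares $\phi^+(\Sscr_1)$ under these two stability conditions ($\phi+2$ versus $\phi$), which is equivalent to your observation that $Q=T_{\Sscr_1}^{-1}T_{\Sscr_2}$ fixes $\sigma$ while $Q(\Sscr_1)\cong\Sscr_1[2]$; the convention you wanted to check is fine, since $\Phi\cdot(Z,\Pscr)=(Z\circ\Phi_*^{-1},\Phi(\Pscr))$ gives $\phi^\pm_{\Phi\sigma}(E)=\phi^\pm_{\sigma}(\Phi^{-1}E)$.
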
 

\begin{proof} (1) Since $g^*$ fixes $\sigma$,  it preserves $\Sigma$. Put $F:=\{\rho\in\Sigma \mid g^*(\rho)=\rho\}$.  The set $F$ is non-empty and closed.  Fix some 
$\rho\in F$.  Choose an open neighbourhood $U_0$ of $\rho$ such that $\Zscr|_{U_0}$ is injective and put $U:=U_0\cap {(g^*)}^{-1}(U_0)$.  For every $\tau\in U$ one has $\tau \in U_0$ and $g^*(\tau) \in U_0$.  Since $g$ acts trivially on $\K_{num}(Y)$,
\[
\Zscr(g^*(\tau))=\Zscr(\tau).
\]
Then $g^*(\tau) = \tau$ by the injectivity of $\mathcal Z|_{U_0}$. 
So $U\subset F$,  hence $F$ is open.  Since $\Sigma$ is connected,  $F=\Sigma$.

(2) Suppose to the contrary that $T_{\Sscr_1}(\sigma) \in \Sigma$.  Then by (1) the stability condition $T_{\Sscr_1}(\sigma)$ is fixed by $g^*$.  Hence
\[
\sigma_1 := T_{\Sscr_1}(\sigma) = g^*(T_{\Sscr_1}(\sigma)) = T_{g^*(\Sscr_1)}(g^*(\sigma)) = T_{\Sscr_2}(\sigma) =: \sigma_2.
\]
For a non-zero object $E$ denote by $\phi_\sigma^+(E)$ the maximal phase of the Harder-Narasimhan factors of $E$ with respect to $\sigma$.  Let $\phi := \phi_\sigma^+(\Sscr_1)$.  Since the two spherical objects $\Sscr_1$ and $\Sscr_2$ are orthogonal, one has $T_{\Sscr_2}(\Sscr_1) \cong \Sscr_1$,  so $\phi_{\sigma_2}^+(\Sscr_1) = \phi$.  On the other hand,  $T_{\Sscr_1}^{-1}(\Sscr_1) = \Sscr_1[2]$,  so $\phi_{\sigma_1}^+(\Sscr_1) = \phi_{\sigma}^+(\Sscr_1[2])  = \phi + 2$.  This contradicts $\sigma_1 = \sigma_2$. 
\end{proof} 

Now we take $Y \subset \PP^4$ to be a smooth quintic threefold containing two lines $\ell_1,\ell_2$ which are $(-1,-1)$-curves exchanged by an involution $g \in \Aut(Y)$.  For instance,  for $\lambda \in \CC$ let $Y_\lambda := V(F_\lambda) \subset \PP^4$, where
\[
F_\lambda := x_0^4x_1+x_1^4x_0+x_2^4x_3+x_3^4x_2+x_4^5 +\lambda x_4(x_0^2x_2^2+x_1^2x_3^2),
\]
and consider a projective involution
\[
g[x_0:x_1:x_2:x_3:x_4]
=
[x_1:x_0:x_3:x_2:x_4]
\]
preserving $Y_\lambda$.  Every $Y_\lambda$
contains the two disjoint lines
\[
\ell_1:= \{x_1 = x_3 = x_4=0 \},
\quad
\ell_2:= \{x_0 = x_2 = x_4= 0\}
\]
satisfying $g(\ell_1)=\ell_2$.  A direct computation shows that $Y_0$ is smooth.  Since smoothness
is an open condition,  there is a Zariski open neighbourhood
$U \ni 0$ such that $Y_\lambda$ is smooth for every
$\lambda\in U$.  We take $Y := Y_\lambda$ for $\lambda \in U \cap \CC^*$. 

\begin{lemma}\label{lem:m1m1quintic}
The lines $\ell_i$, $i=1,2$ are $(-1,-1)$-curves,  i.e.  one has
\[
N_{\ell_i/Y}\cong \Oscr_{\PP^1}(-1)\oplus \Oscr_{\PP^1}(-1).
\]
\end{lemma}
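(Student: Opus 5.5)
The plan is to compute $N_{\ell_i/Y}$ from the normal bundle sequence of $\ell_i\subset Y\subset\PP^4$ and to reduce the splitting type to a statement about global sections. By the symmetry $g(\ell_1)=\ell_2$ with $g\in\Aut(Y)$, we have $N_{\ell_2/Y}\cong g^*N_{\ell_1/Y}$ up to the identification $\ell_1\cong\ell_2$ via $g$. It therefore suffices to treat $\ell_1=\{x_1=x_3=x_4=0\}$, with homogeneous coordinates $x_0,x_2$. First we check that $F_\lambda$ vanishes identically on $\ell_1$: every monomial of $F_\lambda$ involves $x_1$, $x_3$ or $x_4$. So $\ell_1\subset Y$, and likewise $\ell_2\subset Y$.

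Since $Y$ is smooth, there is an exact sequence
\[
0\to N_{\ell_1/Y}\to N_{\ell_1/\PP^4}\cong\Oscr_{\PP^1}(1)^{\oplus 3}\xrightarrow{\ \alpha\ } N_{Y/\PP^4}|_{\ell_1}\cong\Oscr_{\PP^1}(5)\to 0.
\]
Here $\alpha(s_1,s_3,s_4)=s_1\,\partial_1F_\lambda|_{\ell_1}+s_3\,\partial_3F_\lambda|_{\ell_1}+s_4\,\partial_4F_\lambda|_{\ell_1}$, where $s_1,s_3,s_4$ are linear forms in $x_0,x_2$ giving the normal directions $x_1,x_3,x_4$. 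The bundle $N_{\ell_1/Y}$ has rank $2$ and degree $3-5=-2$, so $N_{\ell_1/Y}\cong\Oscr(a)\oplus\Oscr(-2-a)$ with $a\ge -1$. It is $\Oscr(-1)^{\oplus2}$ if and only if $\H^0(N_{\ell_1/Y})=0$, since otherwise $a\ge0$. Hence it suffices to show that the linear map
\[
\H^0(\alpha):\H^0(\Oscr(1))^{\oplus 3}\cong\CC^6\to\H^0(\Oscr(5))\cong\CC^6
\]
is injective.

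Restricting the partial derivatives to $\ell_1$ gives
\[
\partial_1F_\lambda|_{\ell_1}=x_0^4,\qquad \partial_3F_\lambda|_{\ell_1}=x_2^4,\qquad \partial_4F_\lambda|_{\ell_1}=\lambda x_0^2x_2^2.
\]
All other terms contain a factor of $x_1$, $x_3$ or $x_4$ and so vanish on $\ell_1$. Take $s_1,s_3,s_4$ from the basis $\{x_0,x_2\}$. The six images are then
\[
x_0^5,\quad x_0^4x_2,\quad x_0x_2^4,\quad x_2^5,\quad \lambda x_0^3x_2^2,\quad \lambda x_0^2x_2^3,
\]
which are nonzero multiples of the six distinct degree-$5$ monomials because $\lambda\neq0$. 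So $\H^0(\alpha)$ is an isomorphism, and $\H^0(N_{\ell_1/Y})=0$, as required.

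The only real obstacle is setting up the identification of $\alpha$ correctly, namely that the map $N_{\ell/\PP^4}\to N_{Y/\PP^4}|_\ell$ is given by the differential of $F_\lambda$ in the normal coordinates. After that, the monomial check is immediate. It also explains why $\lambda\neq0$ is needed: for $\lambda=0$ the $x_4$-direction would give a section of $N_{\ell_1/Y}$.
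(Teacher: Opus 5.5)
Your proposal is correct and follows the paper's proof essentially verbatim: the normal bundle sequence for $\ell_1\subset Y\subset\PP^4$, the map $(a,b,c)\mapsto x_0^4a+x_2^4b+\lambda x_0^2x_2^2c$ on global sections being an isomorphism for $\lambda\neq0$, and the degree count $-2$ forcing the splitting type $(-1,-1)$. Your monomial check and your use of $g$ for $\ell_2$ are just slightly more explicit versions of what the paper states.
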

\begin{proof}
The homogeneous coordinates on $\ell_1$ are $[x_0:x_2]$ and the three normal directions to $\ell_1$ in $\PP^4$ are represented by $x_1,x_3,x_4$. 
The normal bundle sequence for the inclusions
$\ell_1\subset Y\subset\PP^4$ is
\[
0 \to N_{\ell_1/Y}
\to \Oscr_{\PP^1}(1)^{\oplus3}
\overset{dF_\lambda}{\to}\Oscr_{\PP^1}(5)
 \to 0.
\]
Restricting the partial derivatives of
$F_\lambda$ to $\ell_1$ induces a map
\[
H^0(\Oscr_{\PP^1}(1))^{\oplus3}
\to
H^0(\Oscr_{\PP^1}(5))
\]
sending $(a,b,c)$ to $(x_0^4 a + x_2^4 b + \lambda x_0^2 x_2^2 c)$.  Since $\lambda \neq 0$, this map is an isomorphism. Hence in particular $\H^0(\ell_1,N_{\ell_1/Y})=0$. 
On the other hand,
\[
\deg N_{\ell_1/Y}
=
\deg N_{\ell_1/\PP^4}-\deg N_{Y/\PP^4}|_{\ell_1}
=3-5=-2.
\]
So we have
$N_{\ell_1/Y}\cong \Oscr_{\PP^1}(a)\oplus \Oscr_{\PP^1}(b)$ with $a+b=-2$, while the vanishing of global sections gives $a<0$ and $b<0$.  Thus $a=b=-1$. The argument for $\ell_2$ is identical. 
\end{proof}

For $i=1,2$, set $\Sscr_i:=\Oscr_{\ell_i}(-1)$. 
By Lemma \ref{lem:m1m1quintic} and Lemma \ref{lem:general_sph} the sheaves $\Sscr_i$ are spherical.  Since the lines $\ell_1$ and $\ell_2$ are disjoint,  $\Sscr_1$ and $\Sscr_2$ are orthogonal.  
Moreover,  one has $g^* \Sscr_1\cong\Sscr_2$,  because $g$ is an involution exchanging the two lines $\ell_1$ and $\ell_2$.  Since $Y$ has Picard rank $1$, any automorphism of $Y$ fixes the ample generator $H$ and therefore acts trivially on $\K_{num}(Y)$,  because the numerical even cohomology is generated by $1, H, H^2$ and $H^3$. 

 In \cite[Theorem 1.3]{LiQuintic} Li constructs 
a continuous family 
\[
\sigma_{\alpha,\beta,H}^{a,b}
=
(Z_{\alpha,\beta,H}^{a,b},\Ascr^{\alpha,\beta,H}(Y))
\]
of geometric Bridgeland
stability conditions on $\Dscr^b(Y)$.  This family contains
a neighbourhood of the large-volume limit.  The hearts $\Ascr^{\alpha,\beta,H}(Y)$ are obtained from
$\coh(Y)$ by two successive tilts,  first with respect to $\mu_H$-slope stability and then with respect to the tilt-slope function,  both of which are defined from $H$ and the Chern character.  Since $g^*H=H$,  the equivalence $g^*$ preserves both slope functions and the two torsion pairs, hence it preserves the double-tilt heart
$\Ascr^{\alpha,\beta,H}(Y)$.  The central charge
$Z_{\alpha,\beta,H}^{a,b}$ is expressed entirely in terms of intersections of the Chern character with powers of $H$,  so it is also preserved by $g^*$.  Therefore $g^*(\sigma)=\sigma$ for every $\sigma$ in this family.  Applying Lemma \ref{lem:strategy_quintic} to $Y$, $\Sscr_i$, $g$, and $\sigma$ as defined above,  we conclude: 

\begin{theorem}\label{thm:quintic} Let $Y$ be a symmetric quintic threefold as defined above.  Then the space of numerical locally finite Bridgeland stability conditions $\Stab(Y)$ on $Y$ is disconnected.  More precisely,  if $\sigma$ is any geometric stability condition in the family constructed in \cite[Theorem 1.3]{LiQuintic}, then $\sigma$ and $T_{\Oscr_{\ell_1}(-1)}(\sigma)$ lie in distinct connected components of $\Stab(Y)$.
\end{theorem}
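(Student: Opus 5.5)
The plan is to apply Lemma \ref{lem:strategy_quintic} directly, so the work is to check each of its hypotheses for $Y=Y_\lambda$, the involution $g$, the objects $\Sscr_i=\Oscr_{\ell_i}(-1)$, and a stability condition $\sigma$ from Li's family \cite[Theorem 1.3]{LiQuintic}. Once the hypotheses hold, part (2) of that lemma gives the conclusion immediately: $\sigma$ and $T_{\Sscr_1}(\sigma)$ lie in different components, so $\Stab(Y)$ is disconnected.

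First I will collect the geometric inputs.
\begin{enumerate}
\item By Lemma \ref{lem:m1m1quintic}, both lines are $(-1,-1)$-curves. Lemma \ref{lem:general_sph} then shows that $\Sscr_1,\Sscr_2$ are $3$-spherical.
\item The lines $\ell_1,\ell_2$ are disjoint: on $\ell_1$ we have $x_1=x_3=x_4=0$ and on $\ell_2$ we have $x_0=x_2=x_4=0$, so a common point would have all coordinates zero. Hence $\RHom(\Sscr_1,\Sscr_2)=0$, and the two objects are orthogonal in both directions.
\item Since $g$ is an involution swapping the lines, $g^*\Oscr_{\ell_1}(-1)\cong\Oscr_{g^{-1}(\ell_1)}(-1)=\Sscr_2$. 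Here $g^*$ of the degree $-1$ bundle is again degree $-1$ on $\ell_2\cong\PP^1$.
\item $g$ acts trivially on $\K_{num}(Y)$. By Lefschetz, $\Pic(Y)=\ZZ H$ and $g^*H=H$ because $g$ is linear. Numerical classes are determined by the Chern character paired against the algebraic classes, and on a quintic those are generated by $1,H,H^2,H^3$ together with the line class $H^2/5$, all $g$-invariant. So $\ch(g^*E)=\ch(E)$ numerically.
\end{enumerate}

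Next I will verify hypothesis (ii), that $g^*\sigma=\sigma$. The double-tilt heart $\Ascr^{\alpha,\beta,H}(Y)$ is built from $\coh(Y)$, which $g^*$ preserves, using $\mu_H$-stability and then tilt-stability $\nu_{\alpha,\beta}$. Both depend only on $H^k\cdot\ch_j$, and these are $g^*$-invariant because $g^*H=H$. So $g^*$ maps $\mu_H$-semistable sheaves to $\mu_H$-semistable sheaves of the same slope, preserves the first torsion pair and the first tilt, and the same reasoning applies to the second tilt. The central charge $Z^{a,b}_{\alpha,\beta,H}$ is a polynomial in $H^{3-j}\ch_j$, so $Z\circ g_*=Z$. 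A stability condition is determined by its heart and its central charge, so $g^*\sigma=\sigma$.

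I expect this step to be the main obstacle, though only mildly. One has to be careful that Li's construction uses nothing beyond $H$ and $\ch$, for instance that the quadratic-form and Bogomolov–Gieseker-type inequalities in the construction involve only these quantities. One also has to make sure that the left action convention $\Phi\cdot(Z,\Pscr)=(Z\circ\Phi_*^{-1},\Phi(\Pscr))$ is consistent with the computation in Lemma \ref{lem:strategy_quintic}(2), specifically the identity $g^*(T_{\Sscr_1}\sigma)=T_{g^*\Sscr_1}(g^*\sigma)$. That identity comes from \cite[Lemma 8.21]{HuybrechtsFM}, applied as in Lemma \ref{lem:strategy}.

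With all hypotheses checked, Lemma \ref{lem:strategy_quintic}(2) applies and gives the theorem for every $\sigma$ in Li's family.
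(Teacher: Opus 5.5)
Your proposal is correct and follows the paper's proof. Like the paper, you verify the hypotheses of Lemma~\ref{lem:strategy_quintic}: sphericity via Lemmas~\ref{lem:m1m1quintic} and~\ref{lem:general_sph}, orthogonality from disjointness of the lines, $g^*\Sscr_1\cong\Sscr_2$, trivial action on $\K_{num}(Y)$ from Picard rank one, and $g^*$-invariance of Li's double-tilt heart and central charge. You then conclude by part (2) of that lemma. Your explicit disjointness check, the remark on the line class, and the action convention add detail without changing the argument.
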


\bibliography{bibs2}
\bibliographystyle{amsplain}
\end{document}